\pgfplotsset{compat=1.14}
\newcommand{\tr}{^\intercal}
\newcommand{\R}{\mathbb{R}}
\newcommand{\Z}{\mathbb{Z}}
\newcommand{\N}{\mathbb{N}}
\newcommand{\D}{\mathcal{X}}
\newcommand{\cH}{\mathcal{H}}
\newcommand{\cL}{\mathcal{L}}
\newcommand{\B}{\mathcal{B}}
\newcommand{\SP}{\operatorname{SP}}
\newcommand{\st}{\operatorname{s.t.}}
\newcommand{\dsum}{\displaystyle\sum}
\newcommand{\nint}{\operatorname{nint}}
\newcommand{\dist}{\operatorname{dist}}
\newcommand{\argmin}{\operatorname{argmin}}
\theoremstyle{plain}
\newtheorem{theorem}{Theorem}
\newtheorem{proposition}{Proposition}
\newtheorem{lemma}{Lemma}
\newtheorem{corollary}{Corollary}
\theoremstyle{definition}
\newtheorem{example}{Example}
\theoremstyle{remark}
\newtheorem{remark}{Remark}
\title{Lagrangian Reformulation for Nonconvex Optimization:\\
Tailoring Problems to Specialized Solvers}
\author{
Rodolfo A. Quintero\thanks{Industrial and Systems Engineering, Lehigh University. Email: \texttt{roq219@lehigh.edu}}
\and
Juan C. Vera\thanks{Econometrics and Operations Research, Tilburg University. Email: \texttt{j.c.veralizcano@tilburguniversity.edu}}
\and
Luis F. Zuluaga\thanks{Industrial and Systems Engineering, Lehigh University. Email: \texttt{luis.zuluaga@lehigh.edu}}
}
\date{} 
\begin{document}
\maketitle

\begin{abstract}
In recent years, the interest in reformulating nonconvex optimization problems, particularly those involving binary variables, has surged significantly. This growing attention stems from remarkable advancements in computing technologies, such as quantum and Ising devices, along with notable improvements in both quantum and classical optimization solvers, which leverage specific formulations to solve nonconvex problems.

Our research characterizes the equivalence between equality-constrained nonconvex optimization problems and their Lagrangian relaxation, thereby enabling the application of the aforementioned cutting-edge technologies to solve optimization problems effectively.
In addition to addressing a significant gap in the literature, our results are readily applicable to many important situations in practice.
To achieve these results, we link specific characteristics of optimization problems to broader, classical insights regarding Lagrangian duality in the context of general nonconvex problems. Moreover, our comprehensive approach to examining the equivalence between problem formulations considers not only the objective value but also the optimal solution. This perspective, often overlooked in existing literature, is particularly relevant for problems that involve both continuous and binary variables.
\end{abstract}

\section{Introduction}
\label{sec:intro}
There is considerable freedom in the way that a given optimization problem can be formulated to leverage the particular algorithm or computing hardware that will be used to solve the problem~\citep[][]{liberti2010reformulation}. This flexibility becomes particularly crucial for nonconvex optimization problems given how complex they are to solve~\citep[][]{cartis2022evaluation}.
The landscape of computational devices has evolved dramatically, introducing novel paradigms, such as quantum~\citep[][]{preskill2018quantum, rajak2023quantum} and {\em Ising}~\citep[][]{mohseni2022ising} devices. Concurrently, algorithmic advances, such as {\em quantum annealing}~\citep{santoro2006optimization}, the {\em quantum approximation optimization algorithm} (QAOA)~\citep{farhi2014quantum, hadfield2019quantum}, the {\em quantum hamiltonian descent} (QHD)~\citep{2023QHD}, and a variety of classical global optimization solution methodologies and solvers~\citep{kronqvist2019review, letourneau2023efficient}, offer diverse avenues for addressing particular formulations of nonconvex optimization problems. This proliferation has reignited interest in reformulation techniques tailored to this type of problems~\citep[][]{lasserre2016max, lucas2014ising, gusmeroli2022expedis, quintero2022characterization, date2021qubo, bartmeyer2024benefits}.

Consider, for instance, the {\em quadratic unconstrained binary optimization} (QUBO) reformulations of combinatorial optimization problems~\cite  [][]{punnen2022quadratic}.
A QUBO problem, as its name indicates, is of the general form $\min\{x\tr Q x + c\tr x: x \in \{0,1\}\}$. QUBO encompasses the max-cut problem and the closely related {\em Ising model}~\citep[][]{cipra1987introduction, gaudioso2020view}. Given that Ising devices~\citep[][]{mohseni2022ising}, as well as {\em annealing}~\citep[][]{kumar2018quantum} and {\em gate-based}~\citep[][]{zahedinejad2017combinatorial} quantum devices can address the solution of QUBO problems, and potentially have {\em quantum supremacy}~\citep[][]{preskill2012quantum} over classical computers on this task, it is clearly of interest to study whether or how combinatorial optimization problems that do not have a natural QUBO formulation (e.g., the stable set problem) can be reformulated as a QUBO~\citep[][]{lasserre2016max, lucas2014ising, quintero2022characterization, nannicini2019performance, gusmeroli2022expedis}.
Several combinatorial optimization problems have been reformulated as QUBO's in a rather ad-hoc way, that is, in a case by case way. Similar, but more general, than~\cite{lasserre2016max, gusmeroli2022expedis}, our work proposes a tool to reformulate in an standard way a general class of combinatorial optimization problems.

A classical approach to solving optimization problems with complex or computationally intensive constraints is the use of Lagrangian relaxations~\citep[][]{guignard2003lagrangean, gaudioso2020view}. By relaxing (or dualizing) the constraints and solving the resulting simpler problem iteratively, Lagrangian relaxation provides a systematic approach to finding near-optimal solutions efficiently. The beauty of this approach lies in the associated Lagrangian-duality theory, which offers profound insights into the quality of the obtained solutions and guides the optimization process. However, it is important to note that Lagrangian relaxation often leads to suboptimal solutions; generically, there are no convergence guarantees, and iteratively solving the relaxed problem could be computationally intensive, especially in the nonconvex case.

Our research characterizes the equivalence between equality-constrained nonconvex optimization problems and their Lagrangian relaxation, thereby enabling the aforementioned new technologies to solve these problems.
Specifically,
given continuous functions $f, h_1,\dots,h_m$ defined in a compact set $\D \subseteq \R^n$, consider the following optimization problem
\begin{equation}
\tag{P}
\label{mod:P_D}
\begin{array}{lllll}
\displaystyle \min& f(x) \\
\st & h_j(x) = 0, & j=1,\dots,m,\\
     & x \in \D.
\end{array}
\end{equation}
Given $y \in \R^m$ define, the Lagrangian relaxation
\begin{equation}\label{mod:LDy}\tag{$D_{y}$}
\min_{x\in\D}\left(f(x)+\sum_{j=1}^m y_jh_j(x)\right).
\end{equation}
We are interested in characterizing when $\eqref{mod:P_D}$ has a {\em Lagrangian reformulation}. That is, the cases when for some $y \in \R^m$, the optimal values of $\eqref{mod:P_D}$ and $\eqref{mod:LDy}$ coincide. If furthermore, the set of optimal solutions of $\eqref{mod:P_D}$ and $\eqref{mod:LDy}$ coincide, we say that $\eqref{mod:P_D}$ and $\eqref{mod:LDy}$ are {\em equivalent}.

Notice that having a Lagrangian reformulation is equivalent to the strong duality between~\eqref{mod:P_D}
and its (Lagrangian) dual, plus dual attainment. We prove general versions of these results in Section~\ref{sec:reform}. Namely, in Proposition~\ref{prop:dualstrong}, we prove the needed strong duality result, and in Theorem~\ref{thm:reform}, by characterizing dual attainment, we prove the desired Lagrangian reformulation. Beyond
analyzing the equivalence between problems~\eqref{mod:P_D} and~\eqref{mod:LDy} in terms of their optimal objective,
in Theorem~\ref{thm:sol_convergence}, we further
analyze the equivalence between problems~\eqref{mod:P_D} and~\eqref{mod:LDy} in terms of their optimal solutions.

Naturally, the results described above hold under appropriate assumptions.
To establish the equivalence, we start with the assumption that \( h_j(x) \ge 0 \) on $\D$. Since we are focusing on the non-linear case, this assumption is not limiting. We can substitute each \( h_j(x) \) with \( h_j(x)^2 \), \( |h_j(x)| \), or more generally, with \( \phi(h_j(x)) \), where \( \phi: \mathbb{R} \to \mathbb{R}_+ \) is a function that satisfies \( \phi(x) = 0 \) if and only if \( x = 0 \).

In Proposition~\ref{prop:dualattain}, we characterize the conditions under which problem~\eqref{mod:P_D} has a Lagrangian reformulation. This characterization is based on an abstract equivalent condition related to the behavior of the functions \(f\) and \(h\) over the set \(\D \setminus \cH\). In Theorem~\ref{thm:reform}, we present condition, which is both sufficient and easier to verify. Specifically, let $\cH = \{x \in \R^n: h_j(x) = 0, j=1,\dots,m\}$, then
  problem~\eqref{mod:P_D} has a Lagrangian reformulation if the set \(\D \setminus \cH\) is closed. Since \(\D\) is compact and \(\cH\) is closed, this condition is equivalent to the set \(\D\) being the disjoint union of the two compact sets \(\D \cap \cH\) and \(\D \setminus \cH\). While this condition may appear overly strict, it is essential for applying the Lagrangian reformulation property in practical cases of nonconvex optimization problems (see Section~\ref{sec:apps}), especially when the set \(\D\) is discrete. However, as shown in Examples~\ref{ex:yes} and~\ref{ex:no},  it is not necessary for the set \(\D \setminus \cH\) to be closed to achieve the desired Lagrangian reformulation property.

By leveraging the general Lagrangian reformulation results described above, in Section~\ref{sec:apps}, we show how distinctive nonconvex optimization problems can be reformulated in a way that is amenable to new computing and algorithmic technologies.

In Section~\ref{sec:QUBOapps}, we apply our results to obtaining QUBO reformulations of combinatorial optimization problems. As mentioned earlier, this type of reformulation is relevant because it allows solving combinatorial optimization problems using Ising devices, as well as annealing and gate-based quantum computers,  using, respectively, annealing, quantum annealing, and QAOA algorithms.
In this direction, our general Lagrangian reformulation characterization allows us to generalize and improve related results in the literature. In particular, consider~\citet{lasserre2016max}, which obtains QUBO reformulations of pure binary, linearly equality-constrained quadratic optimization problems. This approach is extended to problems including linear inequality constraints by adding so-called slack variables.
Our results are not limited to problems with linear constraints. 
Thus, we can use them to obtain QUBO reformulations of pure binary, quadratically- and linearly-constrained quadratic optimization problems. We illustrate the relevance of this generalization by obtaining alternative QUBO reformulations of general vertex cover problems. Namely, starting by equivalently formulating the vertex cover problem with quadratic equality constraints (instead of linear inequality constraints), we obtain a QUBO reformulation without adding auxiliary slack variables. In practice, this is consequential, as the number of {\em qubits} (i.e., quantum bits) required to {\em embed} (i.e., encode)~\citep[][]{zbinden2020embedding} a QUBO problem on a quantum device can grow quadratically~\citep{date2019efficiently} with its number of variables. This growth is due to limitations on qubit connectivity on current and near-term quantum devices~\citep[][]{king2020performance}.

In Section~\ref{sec:MIQPapps}, we consider the more general setting of mixed-integer optimization, where the goal is to minimize a function over a feasible set defined by linear constraints with continuous and binary variables with known upper and lower bounds. This problem class encompasses a wide range of practical optimization problems. We characterize how different Lagrangian reformulations can be obtained by relaxing (or dualizing) ($i$) linear constraints, ($ii$) binary constraints, or ($iii$) both (see Theorems~\ref{thm:equivBPpure} and~\ref{thm:equivBPmix}). In the first case, the reformulation is suitable for Ising machines and quantum devices using quantum annealing or QAOA algorithms when all variables are binary; effective methods for box-constrained mixed-integer or continuous polynomial optimization when the objective is a polynomial~\citep{buchheim2014box, letourneau2023efficient}; and branch-and-bound techniques combined with methods for box-constrained quadratic optimization when the objective is quadratic~\citep[][]{kim2010tackling, bonami2018globally}. In the second and third cases, the reformulations are amenable to a wide range of mixed-integer nonlinear optimization solvers~\citep[][for a recent benchmark of such solvers]{kronqvist2019review}, without requiring branch-and-bound techniques for binary variables. Additionally, in the third case, the reformulation is amenable to recent quantum optimization algorithms like {\em quantum gradient descent}~\citep{rebentrost2019quantum}, quantum Hamiltonian descent (QHD)~\citep{2023QHD}, and {\em quantum Langevin dynamics}~\citep{chen2023quantum}, designed for quantum devices. Moreover, in Theorems~\ref{thm:equivBPpure} and~\ref{thm:equivBPmix}, we examine Lagrangian reformulation equivalence not only in terms of the objective but also from the viewpoint of the problem's solutions. This perspective, often overlooked in the literature, is crucial for these second and third cases. We characterize when rounding (to the nearest integer) the solution of the Lagrangian reformulation provides the values of the integer variables of a mixed-integer solution to the original problem (see Theorem~\ref{thm:equivBPmix}\ref{it:2a}-\ref{it:2b}).

Our results bridge a gap between two bodies of work. On the one hand, general Lagrangian duality results for nonconvex optimization~\citep[][among many others]{rockafellar1974augmented, bertsekas1976penalty, huang2003unified, dolgopolik2016unifying, estrin2020implementing} often lack constructive dual attainment characterizations necessary for deriving the practical reformulations that are of interest here. On the other hand, Lagrangian reformulation results exist for specific optimization problem classes, such as linearly constrained pure binary quadratic optimization~\citep[][]{lasserre2016max, lucas2014ising, bartmeyer2024benefits, date2021qubo, quintero2022characterization, gusmeroli2022expedis}, mixed-integer linear optimization~\citep{feizollahi2017exact}, and mixed-integer convex quadratic optimization~\citep{bhardwaj2022exact, gu2020exact}. By deriving constructive strong duality and dual attainment results, we provide Lagrangian reformulation results with practical applications for a broad class of nonconvex optimization problems, including the latter problems mentioned. Furthermore, our results extend theoretical Lagrangian reformulation findings for linear or convex quadratic objectives~\citep{feizollahi2017exact, bhardwaj2022exact, gu2020exact} to the more challenging nonconvex objective case. Moreover, while these works analyze Lagrangian reformulations obtained from a relaxation of the linear constraints in the original problem; here, we consider a much more general class of Lagrangian reformulations obtained, as described above, from relaxing both linear and nonlinear constraints (and possibly nonconvex, such as binary constraints)  in the original problem.

The remainder of the article is organized as follows. Section~\ref{sec:prelim} introduces definitions and basic results used throughout the article. Section~\ref{sec:reform} presents the main results that characterize cases where problem~\eqref{mod:P_D} has a Lagrangian relaxation reformulation. Section~\ref{sec:apps} applies these general results of Section~\ref{sec:reform}
to obtain QUBO reformulations of combinatorial optimization problems (Section~\ref{sec:QUBOapps}) and different Lagrangian reformulations of linearly constrained mixed-integer optimization problems amenable to novel computing devices and solution algorithms
(Section~\ref{sec:MIQPapps}).

\section{Preliminaries}
\label{sec:prelim}
For $x\in \D$ and $y\in \R^m$,
consider the Lagrangian function
\[\cL(x,y) := f(x) + \sum_{j=1}^m y_jh_j(x)\]
 for~\eqref{mod:P_D}. Then, the Lagrangian dual of~\eqref{mod:P_D} is given by:
\begin{equation}
\tag{$D$}
\label{mod:POL}
\displaystyle \sup_{y\in \R^m} \inf_{x \in \D}  f(x)+\dsum_{j=1}^m  y_jh_j(x).
\end{equation}
A basic property of the Lagrangian dual is weak duality.
\medskip
\begin{proposition}[Weak Duality~(Thm. 12.11,~\citep{nocedal2006numerical})]
\label{pro:weak}
$\eqref{mod:P_D}^*  \ge \eqref{mod:POL}^*$.
\end{proposition}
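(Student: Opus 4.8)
The statement to prove is weak duality: $\eqref{mod:P_D}^* \ge \eqref{mod:POL}^*$. Let me sketch a proof.

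The plan is to exploit the fact that for any feasible $x$ of \eqref{mod:P_D} and any $y \in \R^m$, the Lagrangian term $\sum_j y_j h_j(x)$ vanishes, so $\cL(x,y) = f(x)$ on the feasible set. First I would fix an arbitrary $y \in \R^m$ and an arbitrary $x$ feasible for \eqref{mod:P_D}, i.e., $x \in \D$ with $h_j(x) = 0$ for all $j$. Then $\inf_{x' \in \D} \cL(x',y) \le \cL(x,y) = f(x) + \sum_{j=1}^m y_j h_j(x) = f(x)$, since each $h_j(x)=0$.

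Next I would take the infimum over all feasible $x$ on the right-hand side: since the inequality $\inf_{x' \in \D}\cL(x',y) \le f(x)$ holds for every feasible $x$, it follows that $\inf_{x' \in \D}\cL(x',y) \le \inf\{f(x) : x \in \D,\ h_j(x)=0,\ j=1,\dots,m\} = \eqref{mod:P_D}^*$. Finally, since this bound holds for every $y \in \R^m$, I take the supremum over $y$ on the left to obtain $\eqref{mod:POL}^* = \sup_{y \in \R^m}\inf_{x' \in \D}\cL(x',y) \le \eqref{mod:P_D}^*$, which is the claim.

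I do not anticipate a genuine obstacle here; this is the standard weak duality argument and requires no convexity, compactness, or continuity. The only minor point worth stating carefully is that \eqref{mod:P_D} is assumed feasible (otherwise $\eqref{mod:P_D}^* = +\infty$ by convention and the inequality is trivial), and that the chain of inequalities passes correctly through the infimum and supremum. Since this is a well-known result, I would in fact simply cite it (as the excerpt already does, referencing \citep[Thm.~12.11]{nocedal2006numerical}) rather than reproduce the three-line argument.
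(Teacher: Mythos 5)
Your argument is correct and is exactly the standard weak-duality chain that the cited reference (Nocedal--Wright, Thm.~12.11) establishes; the paper itself offers no proof and simply cites that result, as you anticipate. Your handling of the infeasible case via the $+\infty$ convention matches the paper's stated convention, so nothing is missing.
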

As in the standard practice, we use $(\cdot)^*$ above to denote the optimal value of the optimization problem $(\cdot)$.
In particular, note that stating that~\eqref{mod:P_D} has a Lagrangian reformulation is equivalent to stating there is $y \in \R^m$ such that
\begin{equation*}
\eqref{mod:P_D}^* = \eqref{mod:LDy}^*.
\end{equation*}
Further, we define the optimal value of an infeasible (resp. unbounded) minimization problem by $+\infty$ (resp. $-\infty$).

Now, let
\[h(x):= \sum_{j=1}^m h_j(x),\]
and let
\begin{equation}
\label{eq:Hdef}
\cH = \{x \in \R^n: h_j(x) = 0, j=1,\dots,m\}.
\end{equation}
and  $d(x, \cH): = \min\{\|x-z\|: z \in \cH\}$ be the  distance from $x \in \R^n$ to $\cH$. Also,
for any $\epsilon > 0$, let
\begin{equation}
\label{eq:heps}
h^{\epsilon} :=
\min\{h(x): x \in \D, \ d(x, \cH) \ge \epsilon\},
\end{equation}
\begin{equation}
\label{eq:feps}
f_{\epsilon} := \min \{ f(x): x\in \D, \ d(x, \cH)\leq \epsilon\},
\end{equation}
and
\begin{equation}
\label{eq:hepslim}
\tilde h = \lim_{\epsilon \downarrow 0} h^{\epsilon}.
\end{equation}
The following lemma states characteristics of these quantities that will be useful therein.

\begin{lemma}
\label{lem:heps}
Let $f$, $h_j$ for $j=1, \ldots, m$ be real valued continuous functions and $\D \subseteq \R^n$ be a compact set. Assume $h_j(x) \ge 0$ for all $x \in \D$, $j=1,\dots,m$ and $\D\cap \mathcal{H}\neq \emptyset$ and $\D\setminus \cH \neq \emptyset$. Then

\begin{enumerate}[label = (\roman*)]
\item $h^\epsilon$ is non-decreasing in $\epsilon$, for any $\epsilon > 0$; $h^{\epsilon} >0$ for any $\epsilon > 0$; and $\tilde h \ge 0$.
\label{item:hepsone}
\item $f_\epsilon$ is non-increasing in $\epsilon$, for any $\epsilon > 0$ and $\lim_{\epsilon \downarrow 0} f_\epsilon = \eqref{mod:P_D}^*$.
\label{item:hepstwo}
\end{enumerate}
\end{lemma}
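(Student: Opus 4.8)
The plan is to treat the two items separately, in each case exploiting compactness of $\D$ and continuity of the functions to guarantee that the minima defining $h^\epsilon$ and $f_\epsilon$ are attained, and then to track how the feasible sets of these subproblems vary with $\epsilon$.

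For item~\ref{item:hepsone}, first I would observe that the set $S_\epsilon := \{x \in \D : d(x,\cH) \ge \epsilon\}$ is closed (the map $x \mapsto d(x,\cH)$ is continuous, indeed $1$-Lipschitz) and hence compact as a closed subset of the compact set $\D$; moreover $S_\epsilon \ne \emptyset$ for all sufficiently small $\epsilon > 0$ because $\D \setminus \cH \ne \emptyset$ means there is a point of $\D$ at positive distance from $\cH$ (here one uses that $\cH$ is closed, being a level set of continuous functions, so $d(x,\cH) > 0$ for $x \notin \cH$). Thus $h^\epsilon$ is well-defined and attained for small $\epsilon$. Monotonicity is immediate: if $\epsilon_1 \le \epsilon_2$ then $S_{\epsilon_2} \subseteq S_{\epsilon_1}$, so minimizing $h$ over the smaller set gives a value at least as large, i.e. $h^{\epsilon_1} \le h^{\epsilon_2}$. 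For positivity, let $x^*$ attain $h^\epsilon$; then $d(x^*,\cH) \ge \epsilon > 0$, so $x^* \notin \cH$, hence $h_j(x^*) > 0$ for at least one $j$ (otherwise all $h_j(x^*) = 0$ and $x^* \in \cH$), and since $h_j \ge 0$ on $\D$ for every $j$ we get $h(x^*) = \sum_j h_j(x^*) > 0$, i.e. $h^\epsilon > 0$. Finally $\tilde h = \lim_{\epsilon \downarrow 0} h^\epsilon$ exists because $h^\epsilon$ is non-decreasing as $\epsilon \downarrow 0$ it is non-increasing and bounded below by $0$, and each $h^\epsilon > 0$, so the limit is $\ge 0$.

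For item~\ref{item:hepstwo}, the set $T_\epsilon := \{x \in \D : d(x,\cH) \le \epsilon\}$ is again closed, hence compact, and nonempty for every $\epsilon > 0$ since $\D \cap \cH \ne \emptyset$ (any point of $\D \cap \cH$ has distance $0 \le \epsilon$). So $f_\epsilon$ is well-defined and attained. Monotonicity: if $\epsilon_1 \le \epsilon_2$ then $T_{\epsilon_1} \subseteq T_{\epsilon_2}$, so $f_{\epsilon_1} \ge f_{\epsilon_2}$, i.e. $f_\epsilon$ is non-increasing in $\epsilon$. The content is the limit statement $\lim_{\epsilon \downarrow 0} f_\epsilon = \eqref{mod:P_D}^*$. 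Since $\D \cap \cH \ne \emptyset$, problem~\eqref{mod:P_D} is feasible and, by compactness of $\D\cap\cH$ and continuity of $f$, its optimum $\eqref{mod:P_D}^*$ is attained at some $\bar x \in \D \cap \cH$; as $\bar x \in T_\epsilon$ for every $\epsilon$, we have $f_\epsilon \le f(\bar x) = \eqref{mod:P_D}^*$ for all $\epsilon$, and combined with monotonicity the limit $L := \lim_{\epsilon \downarrow 0} f_\epsilon$ exists with $L \le \eqref{mod:P_D}^*$. For the reverse inequality, for each $\epsilon > 0$ pick a minimizer $x_\epsilon \in T_\epsilon$ with $f(x_\epsilon) = f_\epsilon$; by compactness of $\D$, take a sequence $\epsilon_k \downarrow 0$ with $x_{\epsilon_k} \to x_0 \in \D$. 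Since $d(x_{\epsilon_k},\cH) \le \epsilon_k \to 0$ and $d(\cdot,\cH)$ is continuous, $d(x_0,\cH) = 0$, so $x_0 \in \cH$ (using closedness of $\cH$), hence $x_0$ is feasible for~\eqref{mod:P_D} and $f(x_0) \ge \eqref{mod:P_D}^*$. By continuity of $f$, $f_{\epsilon_k} = f(x_{\epsilon_k}) \to f(x_0)$, so $L = f(x_0) \ge \eqref{mod:P_D}^*$. Combining the two inequalities gives $L = \eqref{mod:P_D}^*$.

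The main obstacle — really the only subtle point — is making sure the defining minima are attained and the constraint sets are nonempty: one must invoke that $\cH$ is closed (so that $d(x,\cH)$ is a genuine continuous distance function and $d(x,\cH) > 0$ off $\cH$), that $\D$ is compact (so $S_\epsilon$ and $T_\epsilon$ are compact), and the hypotheses $\D \cap \cH \ne \emptyset$, $\D \setminus \cH \ne \emptyset$ to guarantee the respective feasible sets are nonempty for the relevant range of $\epsilon$; once attainment is secured, the compactness/subsequence argument for the limit in~\ref{item:hepstwo} and the nested-set monotonicity arguments are routine.
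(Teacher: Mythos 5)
Your proof is correct and follows essentially the same route as the paper's: monotonicity from nesting of the constraint sets, positivity of $h^\epsilon$ from $h_j \ge 0$ with at least one $h_j$ strictly positive off $\cH$, and the limit in~\ref{item:hepstwo} via a convergent subsequence of minimizers whose limit lands in $\D \cap \cH$. The only cosmetic difference is that the paper also disposes of the case $S_\epsilon = \emptyset$ (where $h^\epsilon = +\infty > 0$ by convention) rather than restricting to small $\epsilon$, but this does not affect correctness.
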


\begin{proof} Statement \ref{item:hepsone}: The non-decreasing characteristic follows by definition (see~\eqref{eq:heps}). Now take $\epsilon >0$.
If $S:= \{x \in \D: \ d(x, \cH) \ge \epsilon\}$ is empty, $h^{\epsilon} = + \infty$. Otherwise, there is $x \in S$ such that $h^{\epsilon} = h(x) > 0$. This implies both that $h^{\epsilon} >0$ for any $\epsilon > 0$, and $\tilde h \ge 0$.
Statement \ref{item:hepstwo}: From~\eqref{eq:feps}, it follows that $f_{\epsilon}$ is non-increasing in $\epsilon$  and bounded above by $f_0$. Therefore, $\lim_{\epsilon\downarrow 0}f_{\epsilon}$ exists and $\lim_{\epsilon\downarrow 0}f_{\epsilon} \le f_0 =  \eqref{mod:P_D}^*$.
Next, we prove that $\lim_{\epsilon\downarrow 0}f_{\epsilon} \ge \eqref{mod:P_D}^*$, which concludes the proof.
Let $(x_\ell)_{\ell \in \N}\subset \D$ be a sequence generated by taking $x_\ell\in \argmin\{f(x): x\in \D, \ d(x, \cH)\leq 1/\ell\} $.
Because $\D$ is compact, $(x_\ell)_{\ell\in \N}$ has a convergent subsequence $(x_{\ell_k})_{k \in \N}$. Let $\hat{x} = \lim_{k\to \infty} x_{\ell_k}$.
We have, $\hat{x} \in \D$ because~$\D$ is compact. Also,
\begin{align*}
d(\hat{x},\cH) = d\left(\lim_{k\to \infty} x_{\ell_k}, \cH\right) = \lim_{k\to \infty} d(x_{\ell_k},\cH) \leq \lim_{k\to \infty} \tfrac{1}{\ell_k} = 0,
\end{align*}
and thus $\hat{x} \in \cH$ because $\cH$ is closed. Thus $\hat x \in \D\cap \cH$. Therefore
\[\lim_{\epsilon\downarrow 0}f_{\epsilon} = \lim_{k \to \infty }f(x_{\ell_k}) = f(\hat{x}) \ge \eqref{mod:P_D}^*.\]
\end{proof}

The following lemma characterizes conditions under which the optimal value of the Lagrangian relaxation~\eqref{mod:LDy} of~\eqref{mod:P_D}, loosely speaking, gives an epsilon approximation for~\eqref{mod:P_D}$^*$. This result is crucial in proving our general strong duality (Proposition~\ref{prop:dualstrong}) and Lagrangian reformulation (Theorem~\ref{thm:reform}) results.

\begin{lemma}\label{lem:elcaso}
Let $f$, $h_j$ for $j=1, \ldots, m$ be real valued continuous functions and $\D \subseteq \R^n$ be a compact set. Assume $h_j(x) \ge 0$ for all $x \in \D$, $j=1,\dots,m$, $\D\cap \mathcal{H}\neq \emptyset$, and $\D\setminus \mathcal{H}\neq \emptyset$.
Let $\epsilon > 0$ be such that $h^\epsilon < \infty$ and $y \in \R^m$ be such that $y_j \ge \frac{f_{\epsilon} - (D_0)^*}{h^\epsilon}$, $j=1,\dots,m$.  Then, $\inf_{x \in \D} f(x) + \sum_{j=1}^m y_j h_j(x) \ge f_{\epsilon}$.
\end{lemma}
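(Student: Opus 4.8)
The plan is to split the analysis of $\inf_{x\in\D}\cL(x,y)$ according to whether the minimizer is close to or far from $\cH$, using the threshold $\epsilon$. Concretely, for any $x\in\D$ consider the two cases $d(x,\cH)\le\epsilon$ and $d(x,\cH)\ge\epsilon$ (one of which must hold), and bound $\cL(x,y)=f(x)+\sum_{j=1}^m y_jh_j(x)$ from below by $f_\epsilon$ in each case. Since $\D$ is compact and all functions are continuous, the infimum is attained, so it suffices to check the bound at an arbitrary feasible $x$.

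First I would treat the near case: if $d(x,\cH)\le\epsilon$, then by definition \eqref{eq:feps} we have $f(x)\ge f_\epsilon$, and since $y_j\ge\frac{f_\epsilon-(D_0)^*}{h^\epsilon}\ge 0$ (note $f_\epsilon\ge(D_0)^*$ because $(D_0)^*=\inf_{x\in\D}f(x)$ is a minimization over a superset of $\{x\in\D:d(x,\cH)\le\epsilon\}$, and $h^\epsilon>0$ by Lemma~\ref{lem:heps}\ref{item:hepsone}) together with $h_j(x)\ge 0$, we get $\sum_{j=1}^m y_jh_j(x)\ge 0$, hence $\cL(x,y)\ge f(x)\ge f_\epsilon$. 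Second, the far case: if $d(x,\cH)\ge\epsilon$, then $f(x)\ge(D_0)^*$ by definition of $(D_0)^*$, and $h(x)=\sum_{j=1}^m h_j(x)\ge h^\epsilon$ by definition \eqref{eq:heps}. Using the lower bound on each $y_j$ and $h_j(x)\ge 0$,
\[
\sum_{j=1}^m y_jh_j(x)\ \ge\ \frac{f_\epsilon-(D_0)^*}{h^\epsilon}\sum_{j=1}^m h_j(x)\ =\ \frac{f_\epsilon-(D_0)^*}{h^\epsilon}\,h(x)\ \ge\ \frac{f_\epsilon-(D_0)^*}{h^\epsilon}\,h^\epsilon\ =\ f_\epsilon-(D_0)^*,
\]
where the last inequality uses $h(x)\ge h^\epsilon>0$ and the sign of the coefficient $f_\epsilon-(D_0)^*\ge 0$. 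Therefore $\cL(x,y)\ge f(x)+f_\epsilon-(D_0)^*\ge (D_0)^*+f_\epsilon-(D_0)^*=f_\epsilon$.

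Combining the two cases, $\cL(x,y)\ge f_\epsilon$ for every $x\in\D$, and taking the infimum over $x\in\D$ gives the claim. The only mild subtlety — and the step I would state carefully — is verifying that the coefficient $f_\epsilon-(D_0)^*$ is nonnegative, so that multiplying the inequality $h(x)\ge h^\epsilon$ by it preserves direction; this follows because $(D_0)^*=\min_{x\in\D}f(x)$ minimizes over all of $\D$ whereas $f_\epsilon$ minimizes over the smaller set $\{x\in\D:d(x,\cH)\le\epsilon\}$. Everything else is a direct application of the definitions \eqref{eq:heps}–\eqref{eq:feps} and the positivity $h^\epsilon>0$ from Lemma~\ref{lem:heps}; there is no real obstacle beyond bookkeeping the two cases.
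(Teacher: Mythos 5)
Your proof is correct and follows essentially the same route as the paper's: the same two-case split on $d(x,\cH)\le\epsilon$ versus $d(x,\cH)\ge\epsilon$, the same use of $h(x)\ge h^\epsilon$ and $f(x)\ge (D_0)^*$ in the far case, and the same observation that $y_j\ge 0$ handles the near case. You simply spell out a couple of intermediate justifications (nonnegativity of $f_\epsilon-(D_0)^*$ and of the multipliers) that the paper leaves more implicit.
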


\begin{proof}
Assume $\D\cap \mathcal{H}\neq \emptyset$ and $\D\setminus \cH \neq \emptyset$.
Let $\epsilon>0$ be such that $h^\epsilon < \infty$, then
$(D_0)^* \le f_{\epsilon} $, and $h^\epsilon > 0$ by Lemma~\ref{lem:heps}\ref{item:hepsone}. Let
\begin{equation}\label{eq:fyj}
y_j \ge \frac{f_{\epsilon} - (D_0)^*}{h^\epsilon} \ge 0 \text{ for } j=1,\dots,m.
 \end{equation}
 Now, let $x \in \D$. If $d(x,\cH) \le \epsilon$, we have
 \begin{equation}\label{eq:<eps}
f(x) + \sum_{i=1}^m
y_j h_j(x) \ge f(x) \ge f_{\epsilon}.
 \end{equation}
Otherwise, if $d(x,\cH) \ge \epsilon$,  we have $h^{\epsilon} \le h(x)$ and thus,
 \begin{equation}\label{eq:>eps}
 f(x) + \sum_{i=1}^m
 y_j h_j(x) \ge f(x) + f_{\epsilon} -(D_0)^* \ge f_{\epsilon}.
 \end{equation}
Equations~\eqref{eq:<eps} and~\eqref{eq:>eps}
imply
 \[
   \inf_{x \in \D} f(x) + \sum_{j=1}^m
   y_j h_j(x) \ge f_{\epsilon}.
   \]
\end{proof}

As it will be made clear in Section~\ref{sec:reform}, the behaviour of $\tilde{h}$ is tightly related to dual attainment and hence the Lagrangian reformulation property. Lemma~\ref{lem:htilde>0} below relates the behaviour of $\tilde{h}$ with a characteristic of the feasible set of~\eqref{mod:P_D}. This relationship is crucial to obtain practically relevant Lagrangian reformulation results in Section~\ref{sec:apps}.

\begin{lemma}\label{lem:htilde>0}
Let $h_j$ for $j=1, \ldots, m$ be real-valued continuous functions and $\D \subseteq \R^n$ be a compact set. Assume $h_j(x) \ge 0$ for all $x \in \D$, $j=1,\dots,m$. Then $\tilde h >0$ if and only if $\D\setminus \cH$ is a closed set.
\end{lemma}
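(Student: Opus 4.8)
The plan is to reduce the statement to a short point-extraction argument on the compact set $\D$, preceded by a dispatch of the two degenerate configurations. First I would record two elementary observations: on $\D$ the hypothesis $h_j\ge 0$ gives $h(x)=0$ if and only if $x\in\cH$, so $\D\setminus\cH=\{x\in\D: h(x)>0\}$; and $h^\epsilon$ is non-decreasing in $\epsilon$ directly from~\eqref{eq:heps} (a larger threshold shrinks the feasible set and cannot lower the minimum), so the limit defining $\tilde h$ exists and equals $\inf_{\epsilon>0}h^\epsilon\in[0,+\infty]$. Intuitively, $\tilde h>0$ should turn out to be exactly the statement that no sequence in $\D\setminus\cH$ can approach $\cH$, i.e. non-closedness of $\D\setminus\cH$.

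Next I would clear the two degenerate cases. If $\D\setminus\cH=\emptyset$, every $x\in\D$ has $d(x,\cH)=0$, so the feasible set of~\eqref{eq:heps} is empty for all $\epsilon>0$, hence $h^\epsilon=+\infty$ and $\tilde h=+\infty>0$, while $\D\setminus\cH=\emptyset$ is closed, and the equivalence holds. If $\D\cap\cH=\emptyset$ (including $\cH=\emptyset$, with the distance to the empty set read as $+\infty$), then $\D\setminus\cH=\D$ is closed because $\D$ is compact, and $\delta:=d(\D,\cH)>0$ because $\D$ is compact, $\cH$ is closed (being the common zero set of the continuous $h_j$), and the two are disjoint; so for $0<\epsilon\le\delta$ the feasible set of~\eqref{eq:heps} is all of $\D$, giving $h^\epsilon=\min_{x\in\D}h(x)>0$ (minimum of the positive continuous function $h$ over the compact set $\D$), whence $\tilde h>0$ and the equivalence again holds.

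It then remains to treat the case $\D\cap\cH\ne\emptyset$ and $\D\setminus\cH\ne\emptyset$, in which Lemma~\ref{lem:heps}\ref{item:hepsone} supplies $h^\epsilon>0$ for every $\epsilon>0$ and $\tilde h\ge 0$; since $\tilde h\ge 0$, the desired equivalence is the same as ``$\tilde h=0$ if and only if $\D\setminus\cH$ is not closed'', which I would prove in both directions. For the forward direction I would assume $\tilde h=0$, pick $\epsilon_k>0$ with $0<h^{\epsilon_k}<1/k$, use finiteness of $h^{\epsilon_k}$ to get a minimizer $x_k\in\D$ of~\eqref{eq:heps} with $d(x_k,\cH)\ge\epsilon_k>0$ (so $x_k\in\D\setminus\cH$) and $h(x_k)=h^{\epsilon_k}\to 0$, pass to a convergent subsequence $x_{k_\ell}\to\bar x\in\D$, note $h(\bar x)=0$ by continuity, and conclude $\bar x\in\cH$ from $h_j(\bar x)\ge 0$ for all $j$; then $(x_{k_\ell})\subseteq\D\setminus\cH$ converges to $\bar x\notin\D\setminus\cH$, so $\D\setminus\cH$ is not closed. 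For the reverse direction I would take, by non-closedness and compactness of $\D$, a sequence $(x_k)\subseteq\D\setminus\cH$ with $x_k\to\bar x\in\D\cap\cH$, set $\epsilon_k:=d(x_k,\cH)>0$ (positive since $x_k\notin\cH$ and $\cH$ is closed), observe that $x_k$ is feasible in~\eqref{eq:heps} at $\epsilon=\epsilon_k$, and conclude $0\le\tilde h\le h^{\epsilon_k}\le h(x_k)\to h(\bar x)=0$, i.e. $\tilde h=0$. The argument is short; the only places needing care are the bookkeeping of the degenerate cases (especially the $\cH=\emptyset$ convention) and the key observation that a finite $h^{\epsilon_k}$ is realized at a point that automatically lies in $\D\setminus\cH$ --- this is precisely what ties $\tilde h=0$ to the failure of closedness.
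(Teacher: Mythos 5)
Your proof is correct and follows essentially the same route as the paper's: both directions rest on the monotonicity of $h^\epsilon$, the chain $\tilde h \le h^{d(x_k,\cH)} \le h(x_k)$, and the extraction of convergent subsequences of minimizers over the compact set $\D$; you merely argue the two implications in contrapositive form ($\tilde h=0$ iff $\D\setminus\cH$ not closed) and dispatch the degenerate case $\D\cap\cH=\emptyset$ explicitly, where the paper only treats $\D\setminus\cH=\emptyset$. No gaps.
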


\begin{proof}
If $\D \setminus \cH$ is empty then $\D \setminus \cH$ is closed and $\tilde{h} = +\infty$. Thus, assume that $\D \setminus \cH \neq \emptyset$. Assume $\tilde h >0$. Let $(x_\ell)_{\ell\in \N} \subseteq \D\setminus \cH$ be a convergent sequence with limit point $\hat x$.
As $\D$ is compact $\hat x \in \D$.
By definition of $\hat x$ and the continuity of $h$, there exists $\ell \in \N$ such that $|h(\hat x) - h(x_\ell)|<\tilde h$.
Let $\hat \epsilon = d(x_\ell, \cH)$. Then, by Lemma~\ref{lem:heps}\ref{item:hepsone}, $\tilde h \le h^{\hat \epsilon} \le h(x_\ell)$. We have then
$|h(\hat x)| \ge  |h(x_\ell)| - |h(x_\ell) - h(\hat x)| > 0$. Therefore, $\hat x \notin \cH$.

Conversely, assume $\D\setminus \cH$ is closed. Then, $\D\setminus \cH$ is compact since $\D$ is compact. Let $(x_\ell)_{\ell\in \N} \subseteq \D\setminus \cH$ be a sequence satisfying $x_\ell\in \argmin \{h(x): x\in \D, \ d(x, \cH)\geq 1/\ell\}$. As $\D\setminus\cH $ is compact, there is a subsequence $(x_{\ell_k})_{k\in \N}$ converging to some $\hat x \in \D\setminus \cH$. Then $\tilde h = \lim_{k\to \infty}h(x_{\ell_k}) = h(\hat x)>0$.
\end{proof}

As mentioned earlier, our results apply to instances of~\eqref{mod:P_D} defined by  continuous functions $f, h_1,\dots,h_m$ defined in a compact set $\D \subseteq \R^n$. An exception is Theorem~\ref{thm:equivBPpure}, in Section~\ref{sec:apps}, were we consider functions that satisfy the additional mild condition of being $L$-smooth on $\D$. Specifically,  $f: \R^n \to \R$ is $L$-smooth if it is continuously differentiable and its gradient is Lipschitz continuous with Lipschitz constant $L$ on $\D$; that is, $\|\nabla f(x) - \nabla f(y)\| \le L \|x-y\|$ for all $x, y \in \D$~\citep[][Def.~1]{jalilzadeh2022variable}.

\section{Characterizing the Lagrangian reformulation}
\label{sec:reform}
The characterization of the Lagrangian reformulation is conducted in two steps. First, we present a general result regarding the strong duality between problems~\eqref{mod:P_D} and~\eqref{mod:POL}. Next, we characterize the conditions under which the dual optimal value is attained, which is equivalent to establishing a Lagrangian reformulation. Our approach is constructive, enabling us to derive important results about whether and how Lagrangian reformulations of specific practical problems can be achieved.

\subsection{Strong Duality} Numerous proofs of strong duality for nonlinear optimization are available in both classical and contemporary literature (see, e.g., \citep{rockafellar1973dual, mangasarian1975unconstrained}). However, rather than focusing on a general nonlinear optimization framework, we specifically target equality-constrained problems, which allows us to derive simpler proofs.

\begin{proposition}[Strong duality]
\label{prop:dualstrong}
Let $f$, $h_j$ for $j=1, \ldots, m$ be real valued continuous functions and $\D \subseteq \R^n$ be a compact set. Assume $h_j(x) \ge 0$ for all $x \in \D$, $j=1,\dots,m$. Then, \eqref{mod:P_D}$^* =$ \eqref{mod:POL}$^*$.
\end{proposition}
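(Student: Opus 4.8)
The plan is to prove $\eqref{mod:P_D}^* = \eqref{mod:POL}^*$ by establishing both inequalities. Weak duality (Proposition~\ref{pro:weak}) already gives $\eqref{mod:P_D}^* \ge \eqref{mod:POL}^*$, so the work is entirely in showing $\eqref{mod:POL}^* \ge \eqref{mod:P_D}^*$. I would first dispose of the degenerate cases. If $\D \cap \cH = \emptyset$, then $\eqref{mod:P_D}$ is infeasible so $\eqref{mod:P_D}^* = +\infty$; here I would argue that the dual is also $+\infty$, using compactness of $\D$ and continuity together with $h_j \ge 0$ to conclude that $h(x) \ge \delta > 0$ uniformly on $\D$ (since $\D$ is compact and $h$ is continuous and strictly positive everywhere on $\D$), so driving all $y_j \to +\infty$ sends $\inf_{x\in\D}\cL(x,y) \to +\infty$. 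If instead $\D \setminus \cH = \emptyset$, then $h_j \equiv 0$ on $\D$ and both problems trivially equal $\min_{x\in\D} f(x)$. This leaves the main case $\D \cap \cH \ne \emptyset$ and $\D \setminus \cH \ne \emptyset$, where Lemmas~\ref{lem:heps} and~\ref{lem:elcaso} apply.

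\textbf{Main case.} For the main case, fix any $\epsilon > 0$ with $h^\epsilon < \infty$ — such $\epsilon$ exist for all small enough $\epsilon$ since $\D \cap \cH \ne \emptyset$ forces the set $\{x \in \D : d(x,\cH) \ge \epsilon\}$ to be a proper (possibly empty, which makes $h^\epsilon = +\infty$, or nonempty) subset; more carefully, pick $\epsilon$ small enough that $d(x_0, \cH) < \epsilon$ for some $x_0 \in \D$ so this set is not all of $\D$, and separately handle whether it is empty. Actually the cleanest route: for each $\epsilon > 0$ small, if $h^\epsilon < \infty$, choose $y = y(\epsilon) \in \R^m$ with each coordinate equal to $\max\left\{0, \frac{f_\epsilon - (D_0)^*}{h^\epsilon}\right\}$; Lemma~\ref{lem:elcaso} then yields $\inf_{x\in\D}\cL(x, y(\epsilon)) \ge f_\epsilon$, hence $\eqref{mod:POL}^* \ge f_\epsilon$. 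If instead $h^\epsilon = +\infty$ for some $\epsilon > 0$, that means $\D \setminus \cH$ is ``bounded away'' from the rest in the sense that every point of $\D$ lies within $\epsilon$ of $\cH$, i.e., $f_\epsilon = \min_{x\in\D} f(x) = \eqref{mod:P_D}^*$ directly, and one argues the dual bound separately (or simply notes this situation is covered by taking a smaller $\epsilon$). Taking the limit $\epsilon \downarrow 0$ and invoking Lemma~\ref{lem:heps}\ref{item:hepstwo}, which gives $\lim_{\epsilon\downarrow 0} f_\epsilon = \eqref{mod:P_D}^*$, we obtain $\eqref{mod:POL}^* \ge \lim_{\epsilon\downarrow 0} f_\epsilon = \eqref{mod:P_D}^*$. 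Combined with weak duality, this gives equality.

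\textbf{Anticipated obstacle.} The main subtlety I expect is the bookkeeping around $h^\epsilon$: showing that there is a sequence $\epsilon_k \downarrow 0$ along which $h^{\epsilon_k} < \infty$ so that Lemma~\ref{lem:elcaso} can actually be applied, and correctly handling the complementary case $h^\epsilon = +\infty$ (which happens precisely when $\{x \in \D : d(x,\cH) \ge \epsilon\} = \emptyset$, forcing $f_\epsilon = \eqref{mod:P_D}^*$ for that $\epsilon$ and making the dual bound immediate since then any $y \ge 0$ already works via the first case of Lemma~\ref{lem:elcaso}'s proof). A secondary point is making sure the bound in Lemma~\ref{lem:elcaso} is invoked with a valid (finite) multiplier vector, which requires $(D_0)^* = \min_{x\in\D} f(x)$ to be finite — guaranteed by continuity of $f$ on the compact set $\D$. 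None of these are deep; the essential content is packaged in the two preceding lemmas, and the proposition follows by assembling them and passing to the limit.
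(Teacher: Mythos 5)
Your proposal is correct and follows essentially the same route as the paper: weak duality for one direction, the two degenerate cases handled by compactness/triviality, and then Lemma~\ref{lem:elcaso} applied for small $\epsilon$ followed by the limit $\epsilon \downarrow 0$ via Lemma~\ref{lem:heps}\ref{item:hepstwo}. The only quibble is a harmless side remark: when $h^\epsilon = +\infty$ the empty constraint set forces $f_\epsilon = (D_0)^*$ (not $\eqref{mod:P_D}^*$), but since $\D\setminus\cH \neq \emptyset$ guarantees $h^\epsilon < \infty$ for all sufficiently small $\epsilon$, this subcase never interferes with the limit.
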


\begin{proof}
From weak duality (Proposition~\ref{pro:weak}), it is
enough to show that $\eqref{mod:P_D}^* \le \eqref{mod:POL}^*$.  We analyze two cases: First, assume
$\D\cap \cH=\emptyset$, then, \eqref{mod:P_D}$^*=+\infty$. We
show that \eqref{mod:POL}$^* = +\infty$. Since $\D$ is compact, and $h(x) :=\sum_{j=1}^m h_j(x)$ is
continuous, there is $\epsilon>0$ such that $h(x)\geq\epsilon$ for every $x\in \D$. For any $k\geq (D_0)^*$ we have
\begin{align*}
   \sup_{y\in \R^m}\inf_{x\in \D}f(x)+\sum_{j=1}^m  y_jh_j(x) \ge  \inf_{x\in \D} f(x) + \left( \frac{k - (D_0)^*}{\epsilon}\right) h(x) \ge  k,
\end{align*}
and therefore, \eqref{mod:POL}$^* = +\infty$.

Now, assume $\D\cap \mathcal{H}\neq \emptyset$. If $\D\setminus \cH = \emptyset$, then, $\D\cap \cH=\D$, and it follows directly that $\eqref{mod:P_D}^*=\eqref{mod:POL}^*$. Thus, also assume $\D\setminus \cH \neq \emptyset$. Then from Lemma~\ref{lem:elcaso}, it follows that for every $\epsilon>0$ small enough
 \begin{align}\label{eq:up_bd_pepsilon}
 \eqref{mod:POL}^* = \sup_{y \in \R^m} \inf_{x \in \D} f(x) + \sum_{j=1}^m y_j h_j(x) \ge f_{\epsilon}.
 \end{align}
Now from Lemma~\ref{lem:heps}\ref{item:hepstwo}, taking the limit when $\epsilon \downarrow 0$ in~\eqref{eq:up_bd_pepsilon}, we obtain
$
 \eqref{mod:POL}^* \ge \lim_{\epsilon \downarrow 0} f_{\epsilon} = \eqref{mod:P_D}^*.
 $
\end{proof}

The following lemma shows that once equipped with the strong duality (Proposition~\ref{prop:dualstrong}), one obtains a convergent sequence to~$\eqref{mod:P_D}^*$ by solving~$\eqref{mod:LDy}^*$ with increasing values of the Lagrangian multipliers $y_j$, $j=1,\dots,m$.

\begin{lemma}\label{lem:lim=sup}
Let $f$, $h_j$ for $j=1, \ldots, m$ be real valued continuous functions and $\D \subseteq \R^n$ be a compact set. Assume $h_j(x) \ge 0$ for all $x \in \D$, $j=1,\dots,m$. Then
\[\lim_{y_1,\dots,y_m \to \infty} \eqref{mod:LDy}^* = \eqref{mod:P_D}^*.\]
\end{lemma}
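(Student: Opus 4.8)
### Proof Proposal

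The plan is to establish the limit by a two-sided squeeze, handling the degenerate cases separately and then invoking the machinery already developed in Lemmas~\ref{lem:heps} and~\ref{lem:elcaso}. First I would dispose of the case $\D \cap \cH = \emptyset$: here $\eqref{mod:P_D}^* = +\infty$, and exactly as in the proof of Proposition~\ref{prop:dualstrong}, compactness of $\D$ and continuity of $h$ give an $\epsilon > 0$ with $h(x) \ge \epsilon$ on $\D$, so that for $y_1 = \cdots = y_m = t$ we get $\eqref{mod:LDy}^* \ge (D_0)^* + t\epsilon \to +\infty$ as $t \to \infty$; since $h_j \ge 0$, enlarging the $y_j$ only increases the infimum, so the full limit is $+\infty = \eqref{mod:P_D}^*$. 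Next, if $\D \cap \cH \neq \emptyset$ but $\D \setminus \cH = \emptyset$, then $\D = \D \cap \cH$, every $h_j$ vanishes on $\D$, so $\eqref{mod:LDy}^* = (D_0)^* = \eqref{mod:P_D}^*$ for every $y$, and the limit is trivial.

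The main case is $\D \cap \cH \neq \emptyset$ and $\D \setminus \cH \neq \emptyset$. For the upper bound, observe that weak duality (Proposition~\ref{pro:weak}), or simply the fact that $h_j$ vanishes on the feasible set of~\eqref{mod:P_D}, gives $\eqref{mod:LDy}^* \le \eqref{mod:P_D}^*$ for every $y \in \R^m$, hence $\limsup_{y_1,\dots,y_m \to \infty} \eqref{mod:LDy}^* \le \eqref{mod:P_D}^*$. For the lower bound, fix $\epsilon > 0$ small enough that $h^\epsilon < \infty$ (such $\epsilon$ exist since $\D \setminus \cH \neq \emptyset$) and set $M_\epsilon := \max\left\{0, \frac{f_\epsilon - (D_0)^*}{h^\epsilon}\right\}$, which is finite by Lemma~\ref{lem:heps}\ref{item:hepsone}. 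Lemma~\ref{lem:elcaso} then says that whenever $y_j \ge M_\epsilon$ for all $j$, we have $\eqref{mod:LDy}^* \ge f_\epsilon$. Therefore $\liminf_{y_1,\dots,y_m \to \infty} \eqref{mod:LDy}^* \ge f_\epsilon$ for every such $\epsilon$, and letting $\epsilon \downarrow 0$ and using Lemma~\ref{lem:heps}\ref{item:hepstwo} gives $\liminf_{y_1,\dots,y_m \to \infty} \eqref{mod:LDy}^* \ge \lim_{\epsilon \downarrow 0} f_\epsilon = \eqref{mod:P_D}^*$. Combining the two bounds yields the claimed limit.

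I do not anticipate a serious obstacle here; the result is essentially a repackaging of the proof of Proposition~\ref{prop:dualstrong}, the only added subtlety being that one must argue monotonicity in each coordinate $y_j$ (guaranteed by $h_j \ge 0$ on $\D$) to make sense of the multivariate limit $y_1, \dots, y_m \to \infty$ and to reduce it to the regime $y_j \ge M_\epsilon$ covered by Lemma~\ref{lem:elcaso}. The mild care point is the $\limsup/\liminf$ bookkeeping: one should note that $\eqref{mod:LDy}^*$ need not be monotone jointly, but the sandwich above only requires the one-sided inequalities $\eqref{mod:LDy}^* \le \eqref{mod:P_D}^*$ (all $y$) and $\eqref{mod:LDy}^* \ge f_\epsilon$ (all $y$ coordinatewise $\ge M_\epsilon$), which together force convergence of the net to $\eqref{mod:P_D}^*$.
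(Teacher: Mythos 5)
Your proposal is correct, but it takes a different route from the paper's. The paper's proof first establishes a monotonicity claim (since $h_j \ge 0$ on $\D$, the value $\eqref{mod:LDy}^*$ is coordinatewise non-decreasing in $y$), uses it to identify $\lim_{y_1,\dots,y_m\to\infty}\eqref{mod:LDy}^* = \sup_{y\in\R^m}\eqref{mod:LDy}^* = \eqref{mod:POL}^*$, and then simply cites strong duality (Proposition~\ref{prop:dualstrong}). You instead bypass the dual problem entirely and re-run the $\epsilon$-sandwich that underlies the proof of Proposition~\ref{prop:dualstrong} directly on the net $y\to\infty$: weak duality (or feasibility of $\D\cap\cH$) gives the upper bound $\eqref{mod:LDy}^*\le\eqref{mod:P_D}^*$ for all $y$, Lemma~\ref{lem:elcaso} gives $\eqref{mod:LDy}^*\ge f_\epsilon$ once every $y_j\ge M_\epsilon$, and Lemma~\ref{lem:heps}\ref{item:hepstwo} closes the gap as $\epsilon\downarrow 0$. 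Both arguments are sound and rest on the same two lemmas; yours is self-contained and has the minor virtue of making explicit, via the threshold $M_\epsilon$, how large $y$ must be to guarantee a value of at least $f_\epsilon$, and of correctly handling the multivariate limit through one-sided bounds rather than monotonicity. The paper's version is shorter because it reuses Proposition~\ref{prop:dualstrong}, and its monotonicity claim is not throwaway: it is invoked again later (in the proof of Theorem~\ref{thm:sol_convergence}), so if you adopt your route you would still want to record that claim somewhere.
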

\begin{proof} Since $h_j(x) \ge 0$ for all $x\in \D$, the function  $\eqref{mod:LDy}^*$ is non-decreasing in $y_j$, $j = 1,\dots,m$. Therefore $\lim_{y_1,\dots,y_m \to \infty} \eqref{mod:LDy}^* = \sup_{y \in \R^m} \eqref{mod:LDy}^* = \eqref{mod:POL}^*$. The result follows then from strong duality.
\end{proof}

\subsection{Lagrangian Reformulation}

The result in Lemma 4 is intuitive, as higher values of \( y \) correspond to lower constraint violation. Now, we present a stronger result than Lemma~\ref{lem:lim=sup}. In Proposition~\ref{prop:dualattain}, we characterize the conditions under which problem~\eqref{mod:P_D} has a Lagrangian reformulation. This characterization is based on the behavior of \( f \) and \( h \) on the set \( \D \setminus \cH \). Specifically, we determine when \( \eqref{mod:P_D}^* = \eqref{mod:LDy}^* \) for a finite value of the Lagrangian multipliers \( y_j \) for \( j=1, \ldots, m \).

\begin{proposition}
\label{prop:dualattain}
Let $f$, $h_j$ for $j=1, \ldots, m$ be real valued continuous functions and $\D \subseteq \R^n$ be a compact set. Assume $h_j(x) \ge 0$ for all $x \in \D$, $j=1,\dots,m$. Then, there is $y \in \R^m$ such that~\eqref{mod:LDy} is a Lagrangian reformulation of~\eqref{mod:P_D}
 if and only if $\tfrac{\eqref{mod:P_D}^*-f(x)}{\sum_{j=1}^m h_j(x)}$ is bounded above on $\D \setminus \cH$.
\end{proposition}

\begin{proof}
Let $h(x) = \sum_{j=1}^m h_j(x)$.
We prove the contrapositive statement of the if and only if statement. Assume there is no $y\in \R^m$ such that
$\eqref{mod:P_D}^* = \eqref{mod:LDy}^*$ holds.
This  implies that for all  $\ell \in \N$ there is $x_\ell \in \D$ such that
\begin{equation}
\label{eq:prelim}
\eqref{mod:P_D}^*> f(x_\ell)+\ell h(x_\ell) \ge f(x_\ell).
\end{equation}
From~\eqref{eq:prelim}, it follows that  $x_\ell \notin \cH$, otherwise
$x_\ell$ is feasible for~\eqref{mod:P_D} with
$f(x_\ell) < \eqref{mod:P_D}^*$,
a contradiction.
Moreover, \eqref{eq:prelim} implies $\lim_{\ell \to \infty}\tfrac{\eqref{mod:P_D}^*-f(x_\ell)}{h(x_\ell)} = \infty$.

Conversely, assume that $\tfrac{\eqref{mod:P_D}^*-f(x)}{h(x)}$ is unbounded on $\D \setminus \cH$. Thus, given $y\in \R^m$, there is $\hat x \in \D \setminus \cH$ such that $\eqref{mod:P_D}^*>  f(\hat x)+\|y\|_{\infty}h(\hat x) \ge
f(\hat x)+ \sum_{j=1}^m |y_j| h_j(\hat x) \ge f(\hat x)+ \sum_{j=1}^m y_j h_j(\hat x) \ge
\inf_{x \in \D} (f(x)+ \sum_{j=1}^m y_j h_j(x)) = \eqref{mod:LDy}^*$.
\end{proof}

The following examples illustrate the attainment characterization given by Proposition~\ref{prop:dualattain}.

\begin{example}[Lagrangian reformulation]\label{ex:yes}
Let $f(x) = x$, $h(x) = x^2$, and $\D = [0,1]$. Then $\cH = \{0\}$ and $\D \setminus \cH = (0,1]$. Since
\[
\frac{\eqref{mod:P_D}^* - f(x)}{h(x)} = -\frac{1}{x} < 0
\quad \text{for all } x \in (0,1],
\]
this expression is bounded above on $\D \setminus \cH$. Thus, by Proposition~\ref{prop:dualattain}, a Lagrangian reformulation exists. Indeed,
\[
\eqref{mod:P_D}^* = 0 = \min_{x \in [0,1]} \left( x + yx^2 \right), \quad \text{for every } y \ge -1.
\]
\end{example}

\begin{example}[No Lagrangian reformulation]\label{ex:no}
Let $f(x) = x$, $h(x) = x^2$, and $\D = [-1,1]$. Then $\cH = \{0\}$ and $\D \setminus \cH = [-1,0) \cup (0,1]$. In this case,
\[
\frac{\eqref{mod:P_D}^* - f(x)}{h(x)} = -\frac{1}{x}
\]
is not bounded above on $\D \setminus \cH$ (it diverges to $+\infty$ as $x \uparrow 0$ from the left). Hence, by Proposition~\ref{prop:dualattain}, no Lagrangian reformulation exists. We still have $\eqref{mod:P_D}^* = 0 = \eqref{mod:POL}^*$, but the Lagrangian minimization is given by
\[
\min_{x \in [-1,1]} \left( x + yx^2 \right)
= \begin{cases}
y - 1, & \text{if } y < \frac{1}{2}, \\
-\frac{1}{4y}, & \text{if } y \ge \frac{1}{2},
\end{cases}
\]
which is not equal to $0$ for all $y$, confirming the absence of a Lagrangian reformulation.
\end{example}

The next example shows the relevance of the form in which the constraint set is formulated.

\begin{example}[Lagrangian reformulation via alternative $h(x)$]\label{ex:fix}
Continuing from Example~\ref{ex:no}, we consider an equivalent reformulation using \( h(x) = |x| \) instead of \( h(x) = x^2 \). Note that both choices yield the same constraint set:
\[
\cH = \{x \in \D : h(x) = 0\} = \{0\}.
\]
Now,
\[
\frac{\eqref{mod:P_D}^* - f(x)}{h(x)} = -\frac{x}{|x|},
\]
which equals $-1$ for $x > 0$ and $1$ for $x < 0$, so it is bounded above on all of $\D \setminus \cH = [-1,0) \cup (0,1]$. Therefore, by Proposition~\ref{prop:dualattain}, a Lagrangian reformulation \emph{does} exist for this version of the problem.
Indeed,
\[
\eqref{mod:P_D}^* = 0 = \min_{x \in [-1,1]} \left( x + y|x| \right), \quad \text{for all } y \ge 1.
\]

\end{example}

Examples~\ref{ex:yes}-\ref{ex:fix} illustrate the correctness of the Lagrangian reformulation characterization given by \cref{prop:dualattain}. However, this characterization is not practical as it requires the knowledge of~$\eqref{mod:P_D}^*$. For practical purposes, in  \cref{thm:reform} we show that when the set $\D \setminus \cH$ is closed the  problem~\eqref{mod:P_D} has a Lagrangian reformulation. The simplicity of this assumption is pivotal in applying the Lagrangian reformulation to practically relevant instances of nonconvex optimization problems (as discussed in detail in  Section~\ref{sec:apps}). However, as can be seen in \cref{ex:yes}, this condition is not necessary to obtain a
Lagrangian reformulation of~\eqref{mod:P_D}.

\begin{theorem}[Lagrangian reformulation]
\label{thm:reform}
Let $f$, $h_j$ for $j=1, \ldots, m$ be real valued continuous functions and $\D \subseteq \R^n$ be a compact set. Assume $h_j(x) \ge 0$ for all $x \in \D$, $j=1,\dots,m$, $\D\cap \mathcal{H}\neq \emptyset$, and $\D \setminus \cH$ is closed. Then,
\begin{enumerate}[label = (\roman*)]
\item  \eqref{mod:LDy} is a Lagrangian reformulation of~\eqref{mod:P_D} for any $y \in \R^m$ such that $y_j\ge \frac {\eqref{mod:P_D}^* - (D_0)^*}{\tilde{h}}$, $j=1,\dots,m$.
\label{it:dualattain1}

\item For each $y$ such that $y_j>\frac {\eqref{mod:P_D}^* - (D_0)^*}{\tilde{h}}$, $j=1,2,\ldots, m$, we have that~\eqref{mod:LDy} is equivalent to~\eqref{mod:P_D}; that is, the optimal solutions of~\eqref{mod:LDy} coincide with the optimal solutions of~\eqref{mod:P_D}.
\label{it:dualattain2}
\end{enumerate}
\end{theorem}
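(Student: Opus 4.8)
The plan is to prove both parts by combining the constructive bound from Lemma~\ref{lem:elcaso} with the fact that, when $\D \setminus \cH$ is closed, Lemma~\ref{lem:htilde>0} gives $\tilde h > 0$, so dividing by $\tilde h$ is legitimate and the threshold $\frac{\eqref{mod:P_D}^* - (D_0)^*}{\tilde h}$ is a finite number. First I would dispose of the degenerate case $\D \setminus \cH = \emptyset$: then $\D \cap \cH = \D$, problems~\eqref{mod:P_D} and~\eqref{mod:LDy} have the same feasible set and objective on that set (since $h_j \equiv 0$ there), $\tilde h = +\infty$, and both claims are immediate. So assume $\D \setminus \cH \neq \emptyset$, and also note that if $\eqref{mod:P_D}$ is infeasible this is excluded by the hypothesis $\D \cap \cH \neq \emptyset$, so $\eqref{mod:P_D}^*$ is finite (the objective $f$ is continuous on the compact set $\D \cap \cH$, which is closed since $\cH$ is closed).

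For part~\ref{it:dualattain1}: fix $y$ with $y_j \ge \frac{\eqref{mod:P_D}^* - (D_0)^*}{\tilde h}$ for all $j$. Using Lemma~\ref{lem:heps}\ref{item:hepsone}, $h^\epsilon$ is non-decreasing in $\epsilon$ with limit $\tilde h$, so $h^\epsilon \le \tilde h$ for every $\epsilon > 0$; combined with Lemma~\ref{lem:heps}\ref{item:hepstwo} ($f_\epsilon \ge \eqref{mod:P_D}^* \ge (D_0)^*$ by weak duality applied to $(D_0)$, hence the numerator below is nonnegative), we get
\[
\frac{f_\epsilon - (D_0)^*}{h^\epsilon} \le \frac{f_\epsilon - (D_0)^*}{\tilde h}.
\]
Wait --- this is the wrong direction since $f_\epsilon \ge \eqref{mod:P_D}^*$. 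Instead I would pick $\epsilon$ small and use $f_\epsilon$ close to $\eqref{mod:P_D}^*$: for any $\delta > 0$ choose $\epsilon$ with $f_\epsilon \le \eqref{mod:P_D}^* + \delta \tilde h$ (possible by Lemma~\ref{lem:heps}\ref{item:hepstwo}); then since $h^\epsilon \le \tilde h$ we have $\frac{f_\epsilon - (D_0)^*}{h^\epsilon}$ could still be large. The clean route: since $h^\epsilon \le \tilde h$ and $f_\epsilon - (D_0)^* \le \eqref{mod:P_D}^* - (D_0)^* + \delta\tilde h$ is not quite what Lemma~\ref{lem:elcaso} wants either. The correct argument is to apply Lemma~\ref{lem:elcaso} directly: it requires $y_j \ge \frac{f_\epsilon - (D_0)^*}{h^\epsilon}$ and concludes $\eqref{mod:LDy}^* \ge f_\epsilon$. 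So I need: for every $\delta>0$ there is $\epsilon>0$ with $\frac{f_\epsilon - (D_0)^*}{h^\epsilon} \le \frac{\eqref{mod:P_D}^* - (D_0)^*}{\tilde h} + \delta$. Since $f_\epsilon \downarrow \eqref{mod:P_D}^*$ and $h^\epsilon \uparrow \tilde h$ with $\tilde h > 0$ and $\eqref{mod:P_D}^* - (D_0)^* \ge 0$, the quotient $\frac{f_\epsilon - (D_0)^*}{h^\epsilon} \to \frac{\eqref{mod:P_D}^* - (D_0)^*}{\tilde h}$ as $\epsilon \downarrow 0$ (continuity of division away from zero), giving exactly this. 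Hence for $y$ satisfying the \emph{strict} inequality in the threshold plus $\delta$, Lemma~\ref{lem:elcaso} yields $\eqref{mod:LDy}^* \ge f_\epsilon \ge \eqref{mod:P_D}^*$; together with weak duality $\eqref{mod:LDy}^* \le \eqref{mod:P_D}^*$ (any feasible $x$ of~\eqref{mod:P_D} has $h_j(x)=0$, so $\cL(x,y)=f(x)$), we conclude $\eqref{mod:LDy}^* = \eqref{mod:P_D}^*$. For the boundary case $y_j = \frac{\eqref{mod:P_D}^* - (D_0)^*}{\tilde h}$ exactly, I would use Claim~(a) from Lemma~\ref{lem:lim=sup} ($\eqref{mod:LDy}^*$ is monotone in $y$) plus a limiting argument over $y \downarrow$ the threshold, or argue $\eqref{mod:LDy}^*$ is continuous/monotone and sandwiched, to extend equality to the closed threshold.

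For part~\ref{it:dualattain2}: with $y_j$ strictly above the threshold, I first note every optimal solution $\bar x$ of~\eqref{mod:P_D} is feasible, so $\cL(\bar x, y) = f(\bar x) = \eqref{mod:P_D}^* = \eqref{mod:LDy}^*$, hence $\bar x$ is optimal for~\eqref{mod:LDy}; this gives one inclusion. Conversely, let $x^\star$ be optimal for~\eqref{mod:LDy}. If $x^\star \notin \cH$, then $d(x^\star,\cH) = \epsilon' > 0$ for some $\epsilon'$, so $h^{\epsilon'} \le h(x^\star)$ and, by Lemma~\ref{lem:htilde>0}, $\tilde h \le h^{\epsilon'}$, giving a strictly positive $h(x^\star)$; then, using the strictness of the inequality on $y_j$ and retracing the estimate~\eqref{eq:>eps} in Lemma~\ref{lem:elcaso}, we get $\cL(x^\star, y) = f(x^\star) + \sum_j y_j h_j(x^\star) > f_{\epsilon'} \ge \eqref{mod:P_D}^* = \eqref{mod:LDy}^*$, a contradiction. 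Hence $x^\star \in \cH$, so $x^\star$ is feasible for~\eqref{mod:P_D}, $\cL(x^\star,y) = f(x^\star)$, and optimality forces $f(x^\star) = \eqref{mod:LDy}^* = \eqref{mod:P_D}^*$, i.e. $x^\star$ is optimal for~\eqref{mod:P_D}.

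The main obstacle I anticipate is handling the boundary case in part~\ref{it:dualattain1} cleanly — showing equality holds at the exact threshold value $y_j = \frac{\eqref{mod:P_D}^* - (D_0)^*}{\tilde h}$, rather than only strictly above it — since Lemma~\ref{lem:elcaso} naturally delivers a bound of the form $\eqref{mod:LDy}^* \ge f_\epsilon$ for multipliers at least $\frac{f_\epsilon - (D_0)^*}{h^\epsilon}$, and passing to the limit $\epsilon \downarrow 0$ in that bound requires care that the quotient converges to (not merely is dominated by) $\frac{\eqref{mod:P_D}^* - (D_0)^*}{\tilde h}$; the monotonicity facts from Lemma~\ref{lem:heps} and $\tilde h > 0$ from Lemma~\ref{lem:htilde>0} are exactly what make this limit well-behaved, so the argument goes through, but this is the step that needs the most attention.
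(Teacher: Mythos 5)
Your overall architecture matches the paper's proof (dispose of the case $\D\setminus\cH=\emptyset$, invoke Lemma~\ref{lem:htilde>0} to get $\tilde h>0$, feed the constructive multiplier bound of Lemma~\ref{lem:elcaso}, and prove part~\ref{it:dualattain2} by contradiction on an infeasible minimizer), but you have reversed the directions of both monotone limits, and this reversal simultaneously manufactures a difficulty that is not there and breaks the inequality chains you use to close each part. Since $f_\epsilon$ is the minimum of $f$ over the \emph{enlarged} set $\{x\in\D:\ d(x,\cH)\le\epsilon\}\supseteq\D\cap\cH$, one has $f_\epsilon\le\eqref{mod:P_D}^*$ for every $\epsilon>0$, with $f_\epsilon$ increasing to $\eqref{mod:P_D}^*$ as $\epsilon\downarrow 0$; likewise $h^\epsilon$ is non-decreasing in $\epsilon$, so its limit as $\epsilon\downarrow 0$ is its infimum, i.e.\ $h^\epsilon\ge\tilde h$ for every $\epsilon>0$. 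You assert the opposite in both cases (``$f_\epsilon\ge\eqref{mod:P_D}^*$'', ``$h^\epsilon\le\tilde h$'', ``$f_\epsilon\downarrow\eqref{mod:P_D}^*$'', ``$h^\epsilon\uparrow\tilde h$''). With the correct directions, the quotient $\frac{f_\epsilon-(D_0)^*}{h^\epsilon}$ has a smaller (nonnegative) numerator and a larger (positive) denominator than $\frac{\eqref{mod:P_D}^*-(D_0)^*}{\tilde h}$, hence is bounded above by that threshold for \emph{every} $\epsilon>0$. Consequently any $y$ meeting the theorem's threshold --- including with equality --- satisfies the hypothesis of Lemma~\ref{lem:elcaso} for every small $\epsilon$; there is no boundary case, and no $\delta$-perturbation or continuity-in-$y$ argument is needed. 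One simply obtains $\eqref{mod:LDy}^*\ge f_\epsilon$ for all small $\epsilon$ and lets $\epsilon\downarrow 0$ via Lemma~\ref{lem:heps}\ref{item:hepstwo}. That limiting step is also what your write-up omits: you conclude ``$\eqref{mod:LDy}^*\ge f_\epsilon\ge\eqref{mod:P_D}^*$'', but the second inequality is false, so part~\ref{it:dualattain1} does not close as written.

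The same false inequality reappears at the end of part~\ref{it:dualattain2}: your contradiction reads ``$\cL(x^\star,y)>f_{\epsilon'}\ge\eqref{mod:P_D}^*$'', and again $f_{\epsilon'}\ge\eqref{mod:P_D}^*$ does not hold. The correct route (the paper's) avoids $f_{\epsilon'}$ entirely: if $x^\star\notin\cH$ then $h(x^\star)\ge\tilde h>0$, so the strict hypothesis on $y$ gives $\sum_{j} y_jh_j(x^\star)>\eqref{mod:P_D}^*-(D_0)^*$, and since $f(x^\star)\ge(D_0)^*$ one gets $\cL(x^\star,y)>\eqref{mod:P_D}^*$, contradicting weak duality. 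The easy inclusion (every optimal solution of~\eqref{mod:P_D} is optimal for~\eqref{mod:LDy}) you handle correctly, as you do the degenerate case. In short: the skeleton is right, but the two monotonicity directions must be flipped; once they are, both the phantom boundary case and the broken final inequalities disappear, and the argument reduces to the paper's.
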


\begin{proof}
If $\D\setminus \cH = \emptyset$, then, $\D\cap \cH=\D$, and it follows directly that problem~\eqref{mod:P_D} and~\eqref{mod:LDy}, for any $y \in \R^m$ are equivalent; that is, statements~\ref{it:dualattain1} (i.e., $\eqref{mod:LDy}^* = \eqref{mod:P_D}^*$) and~\ref{it:dualattain2} (i.e., $\argmin_x\eqref{mod:LDy} = \argmin_x\eqref{mod:P_D}$) follow. Thus, in what follows, we assume that $\D\cap \mathcal{H}\neq \emptyset$ and closed.

To prove statement~\ref{it:dualattain1}, let $y \in \R^m$ such that $y_j\geq \tfrac{1}{\tilde{h}}(\eqref{mod:P_D}^* - (D_0)^*)$, $j=1,\dots,m$.
let $\epsilon>0$ small enough,
then
$
(D_0)^* \le f_{\epsilon} \le \eqref{mod:P_D}^*$, and $h^\epsilon \ge \tilde h > 0$ by Lemmas~\ref{lem:heps} and~\ref{lem:htilde>0}. Thus
\[
y_j \ge
\frac{f_{\epsilon} - (D_0)^*}{h^\epsilon} \ge 0 \text{ for } j=1,\dots,m.
\]
 By weak duality and Lemma~\ref{lem:elcaso},
 \begin{align}\label{eq:up_bd_pepsilon2}
\eqref{mod:P_D}^* \ge \eqref{mod:POL}^* \ge   \inf_{x \in \D} f(x) + \sum_{j=1}^m y_j h_j(x) \ge f_{\epsilon}.
 \end{align}
 Now from Lemma~\ref{lem:heps}, taking the limit when $\epsilon \downarrow 0$ in~\eqref{eq:up_bd_pepsilon2}, we obtain
$
 \eqref{mod:P_D}^*  =  \inf_{x \in \D} f(x) + \sum_{j=1}^m y_j h_j(x).
 $

To prove statement~\ref{it:dualattain2}, let $x^*\in \D$ be optimal for~\eqref{mod:LDy}. If $x^*\in \D\setminus \mathcal{H}$,
there is $\epsilon > 0$ such that $h(x^*) \ge h^\epsilon  \ge \tilde{h} >0$. Thus,
we have
\begin{align*}
f(x^*)+\sum_{j=1}^m y_j h_j(x^*) > f(x^*) + \eqref{mod:P_D}^* - (D_0)^* \geq  \eqref{mod:P_D}^*
\end{align*}
which contradicts the weak duality (Proposition~\ref{pro:weak}). Therefore, $x^*\in \D\cap \mathcal{H}$, and by statement~\ref{it:dualattain1}, $f(x^*) = \eqref{mod:P_D}^*$, which proves that $x^*$ is optimal for~\eqref{mod:P_D}. On the other hand, if $\tilde{x}^*$ is an optimal solution of~\eqref{mod:P_D}, then $\tilde{x}^*$ is feasible for~\eqref{mod:LDy} with objective value $f(\tilde{x}^*) = \eqref{mod:P_D}^* = \eqref{mod:LDy}^*$, where the last equality follows from statement~\ref{it:dualattain1}. Thus, the optimal solution sets of~\eqref{mod:P_D} and~\eqref{mod:LDy} coincide.
\end{proof}

The next example illustrates the need for the strictly greater than  sign in Theorem~\ref{thm:reform}\ref{it:dualattain2}.

\begin{example}
Let $\D=\{(x_1,x_2)\in \R^2: x_1(1-x_1)=0, \text{ and } 0\leq x_2 \leq 1\}$,  $f(x_1,x_2)=(x_1-1)^2+x_2^2$ and $h(x_1,x_2)= x_1(2-x_1)$. Then, $(D_0)^*=0$, which is attained at $(1,0)$ and $\eqref{mod:P_D}^*=1$, which is attained at $(0,0)$. Also, $h_{\epsilon}=1$ for every sufficiently small $\epsilon>0$, therefore $\tilde{h}=1$. Thus,  problem~\eqref{mod:LDy} for $y=\frac {\eqref{mod:P_D}^* - (D_0)^*}{\tilde{h}} = 1$ is
\begin{align*}
\min_{x\in \D} (x_1-1)^2+x_2^2 + 1(x_1(2-x_1)) = \min_{x\in \D} 1+x_2^2 = 1 = \eqref{mod:P_D}^*
\end{align*}
and the minimum occurs at $\{(0,0),(1,0)\}$. However, the point $(1,0)$ is infeasible for~\eqref{mod:P_D}.
\end{example}

In Section~\ref{sec:apps}, we use Theorem~\ref{thm:reform}
to reformulate distinctive nonconvex optimization problems in a way that is amenable to new computing and algorithmic technologies.

\subsection{Convergence to primal solution  w/o dual attainment}
In this section, we present a primal convergent attainment result which does not require  $\D \setminus \cH$ to be closed. Unsurprisingly, this result is weaker than Theorem~\ref{thm:reform}\ref{it:dualattain2}. Yet, as evidenced in  Section~\ref{sec:MIQPapps}, this result is crucial to extend the application of our reformulation results from the realm of purely integer problems (where the closure condition holds) to the one of mixed-integer problems (where the closure condition does not hold).

\begin{theorem}\label{thm:sol_convergence} Let $f$, $h_j$ for $j=1, \ldots, m$ be real valued continuous functions and $\D \subseteq \R^n$ be a compact set. Assume $h_j(x) \ge 0$ for all $x \in \D$, $j=1,\dots,m$.
Suppose $\D$ is compact and $\D\cap \mathcal{H} \neq \emptyset$. Let $(y_\ell)\subset \R$  be an increasing sequence of positive numbers diverging to infinity, and for each $\ell \in \N$, let
$\hat{x}_\ell \in \argmin \{f(x)+y_\ell \sum_{j=1}^m h_j(x):x\in \D\}$.
Then, every accumulation point of $(\hat x_\ell)$ is an optimal solution of~\eqref{mod:P_D}.
\end{theorem}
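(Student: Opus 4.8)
The plan is to combine the monotonicity and convergence facts already established in Lemmas~\ref{lem:heps}, \ref{lem:lim=sup}, and Proposition~\ref{prop:dualstrong} with a straightforward compactness and continuity argument. First I would fix an accumulation point $\bar x$ of the sequence $(\hat x_\ell)$ and pass to a subsequence $(\hat x_{\ell_k})$ converging to $\bar x$; since $\D$ is compact and closed, $\bar x \in \D$. The two things to prove are that $\bar x \in \cH$ (primal feasibility) and that $f(\bar x) \le \eqref{mod:P_D}^*$ (optimality), the reverse inequality being automatic once feasibility is known.

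For feasibility, I would argue by contradiction: if $\bar x \notin \cH$, then $h(\bar x) > 0$ (using $h_j \ge 0$ on $\D$ and continuity of $h$), so by continuity there is $\delta > 0$ and an index $K$ with $h(\hat x_{\ell_k}) \ge \delta$ for all $k \ge K$. On the other hand, since $\D \cap \cH \neq \emptyset$, pick any $z \in \D \cap \cH$; then by optimality of $\hat x_{\ell_k}$ for $\eqref{mod:LDy}$ with $y = y_{\ell_k}$,
\[
f(\hat x_{\ell_k}) + y_{\ell_k} h(\hat x_{\ell_k}) \le f(z) + y_{\ell_k} h(z) = f(z).
\]
Rearranging gives $y_{\ell_k}\delta \le f(z) - f(\hat x_{\ell_k}) \le f(z) - \min_{x\in\D} f(x)$, which is a fixed finite bound; letting $k \to \infty$ with $y_{\ell_k} \to \infty$ yields a contradiction. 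Hence $\bar x \in \cH$, so $\bar x$ is feasible for $\eqref{mod:P_D}$ and $f(\bar x) \ge \eqref{mod:P_D}^*$.

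For optimality, note that $\eqref{mod:LDy}^*\big|_{y = y_\ell} = f(\hat x_\ell) + y_\ell h(\hat x_\ell) \ge f(\hat x_\ell) - 0 \cdot$ is not quite what I want directly; instead I would use the chain
\[
f(\hat x_{\ell_k}) \le f(\hat x_{\ell_k}) + y_{\ell_k} h(\hat x_{\ell_k}) = \eqref{mod:LDy}^*\big|_{y = y_{\ell_k}} \le \sup_{y\in\R^m}\eqref{mod:LDy}^* = \eqref{mod:POL}^* = \eqref{mod:P_D}^*,
\]
where the middle equality is optimality of $\hat x_{\ell_k}$, the next inequality is trivial, the equality $\sup_y \eqref{mod:LDy}^* = \eqref{mod:POL}^*$ uses the monotonicity Claim (a) from the proof of Lemma~\ref{lem:lim=sup}, and the final equality is strong duality (Proposition~\ref{prop:dualstrong}). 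Taking $k \to \infty$ and using continuity of $f$ gives $f(\bar x) = \lim_k f(\hat x_{\ell_k}) \le \eqref{mod:P_D}^*$. Combined with $f(\bar x) \ge \eqref{mod:P_D}^*$ from the feasibility step, we get $f(\bar x) = \eqref{mod:P_D}^*$ with $\bar x$ feasible, i.e.\ $\bar x$ is an optimal solution of $\eqref{mod:P_D}$.

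\textbf{Main obstacle.} There is no deep obstacle here — the statement is essentially a corollary of the machinery built up in the section. The one point requiring a little care is that the hypotheses of Theorem~\ref{thm:sol_convergence} are slightly weaker than those used elsewhere (it does not explicitly restate $h_j \ge 0$ on $\D$ or $\D$ nonempty), so I would make sure the $h_j \ge 0$ assumption is in force (it is the standing assumption of the section and is needed both for $h(\bar x) > 0$ to follow from $\bar x \notin \cH$ and for the monotonicity claim), and handle the degenerate case $\D \subseteq \cH$ (where every $\hat x_\ell$ is already feasible and optimal) separately if needed. The contradiction step for feasibility is the substantive part, and it is just the classical penalty-method argument.
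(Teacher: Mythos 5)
Your proposal is correct. The decomposition into feasibility plus optimality, and the feasibility step itself, match the paper's proof: the paper's Claim~(i) bounds $0 \le h(x_k) \le (\eqref{mod:P_D}^*+M)/y_{\ell_k} \to 0$ using weak duality and the uniform bound $|f|\le M$ on the compact set $\D$, which is the same penalty-method argument you run by contradiction against a fixed $z \in \D\cap\cH$. Where you genuinely diverge is the optimality half. The paper proves two further claims — that $(\cL(x_k,y_{\ell_k}))_k$ is non-decreasing and that $\cL(x_k,y_{\ell_k}) \to f(x^*)$ via an $\varepsilon$-argument comparing $\cL(x_k,y_{\ell_k})$ with $\cL(x_r,y)$ for a fixed auxiliary multiplier $y$ — and then invokes Lemma~\ref{lem:lim=sup} to identify the limit with $\eqref{mod:P_D}^*$. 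You instead use the one-line chain $f(\hat x_{\ell_k}) \le \cL(\hat x_{\ell_k}, y_{\ell_k}) = \eqref{mod:LDy}^*\big|_{y=y_{\ell_k}} \le \eqref{mod:POL}^* \le \eqref{mod:P_D}^*$ (only weak duality is actually needed for the last step, so your appeal to Proposition~\ref{prop:dualstrong} and to the monotonicity claim is dispensable) and pass to the limit by continuity of $f$. Your route is shorter and more elementary; the paper's route additionally establishes the convergence of the optimal Lagrangian values along the subsequence to $f(x^*)$, which is more information than the theorem statement requires. Your caveats are also well placed: the standing assumptions $h_j \ge 0$ on $\D$ and positivity of the $y_\ell$ are indeed used (for $h(\bar x)>0$ when $\bar x\notin\cH$ and for $f(\hat x_{\ell_k}) \le \cL(\hat x_{\ell_k},y_{\ell_k})$) even though the theorem does not restate them.
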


\begin{proof}
Let $x^*$ be an accumulation  point of $(\hat{x}_\ell)$, and let $(x_k):=(\hat{x}_{\ell_k})$ be a subsequence of $(\hat{x}_\ell)$
converging to $x^*$. Clearly, $x^*\in \D$, because $\D$ is compact. Let $h(x) =  \sum_jh_j(x)$. Now, we proceed to prove three claims that we will use  to  prove the desired result.

\begin{enumerate}[label={Claim (\roman*)}, wide, labelwidth=0pt, labelindent=0pt]
\item $x^*\in \D\cap \mathcal{H}$ and for any $y \in \R$, $\cL(x^*, y) = f(x^*)$. To prove this claim it is enough to prove that $h(x^*) = 0$. To see this, note that from weak duality (see Proposition~\ref{pro:weak}), for every $k\in \N$, $f(x_k) + y_{\ell_k}h(x_k) \leq \eqref{mod:P_D}^*<\infty$ (because $\D\cap \mathcal{H}\neq \emptyset$). Since
$\D$ is compact, $|f(x_k)|\leq M $ for every $k\in \N$, and some $M>0$.
Therefore, $0\leq h(x_k)\leq \frac{\eqref{mod:P_D}^*-f(x_k)}{y_{\ell_k}} \leq \frac{\eqref{mod:P_D}^*+M}{y_{\ell_k}}$. Thus,
$0 = \lim_{k\to \infty} h(x_k) = h(x^*)$, by continuity of~$h$, which proves that $x^*\in \D\cap \mathcal{H}$. This in turn implies that for any $y \in \R$, $\cL(x^*, y) = f(x^*)$.
\label{claim:1}

\item $(\cL(x_k, y_{\ell_k}))_{k\in \N}$ is a non-decreasing sequence. To prove this claim, let~$k$ and~$l$ be positive integers such that $k<l$. Then
\begin{align*}
\cL(x_k, y_{\ell_k}) &= f(x_k)+y_{\ell_k}h(x_k) \leq f(x_l)+y_{\ell_k}h(x_l),  && \text{(by optimality of $x_k$)}\\
&\leq f(x_l) + y_{\ell_l}h(x_l) = \cL(x_l, y_{\ell_l}), && \text{(since $y_{\ell_k}<y_{\ell_l}$)}
\end{align*}
Notice that the last inequality might not be strict because $h(x_l)$ can be zero.
\label{claim:2}

\item $\cL(x_k,y_{\ell_k}) \xrightarrow{k\to \infty} f(x^*)$. To prove this claim, we show that for any $\epsilon > 0$, $|f(x^*)- \cL(x_k,y_{\ell_k})| < \epsilon$ for a large enough $k \in \N$. For that purpose, let $\varepsilon>0$ and $y>0$ be fixed. By
continuity of $\cL(\cdot,y)$ and the definition of $x^*$, there exists $N\in \N$ such that for
every $l>\max\{N,y\}$,
\begin{equation}
\label{eq:limit}
|\cL(x^*,y) - \cL(x_l,y)|<\varepsilon.
\end{equation}
Now, let $r \in \N$ such that $y_{\ell_r}>l$.
Then, for every $k>r$, we have the following chain of inequalities
\begin{align}
\label{eq:chain}
f(x^*) \geq \cL(x_k,y_{\ell_k}) \geq \cL(x_r,y_{\ell_r}) \geq f(x_r) + y h(x_r) = \cL(x_r,y),
\end{align}
where the first inequality follows from weak duality (see Proposition~\ref{pro:weak}) and
the second follows from~\ref{claim:2}. Thus,
\begin{align*}
0 \leq f(x^*) - \cL(x_k, y_{\ell_k})
& \leq f(x^*) - \cL(x_r, y) && \text{(from~\eqref{eq:chain})}\\ & = |f(x^*) - \cL(x_r,y)|   & \\
& = |\cL(x^*,y) - \cL(x_r,y)| < \varepsilon, && \text{(from~\ref{claim:1} and~\eqref{eq:limit})}
\end{align*}
which completes the claim's proof.
\label{claim:3}
\end{enumerate}

To finish the proof, notice that as the sequence $(y_\ell)$ diverges to infinity, then~\ref{claim:3} together with  Lemma~\ref{lem:lim=sup} imply the statement of the theorem.
\end{proof}

\begin{example}[Convergence to primal solution]\label{ex:convergence_no_reform}
Continuing from Example~\ref{ex:no}, consider the penalized problems
\[
\min_{x \in [-1,1]} \left\{ x + \ell x^2 \right\}, \quad \text{for } \ell= 1,2,\dots
\]
Each objective is strictly convex and differentiable, with minimizer
\(
\hat{x}_\ell = -\tfrac{1}{2\ell}.
\)
Since $y_\ell = \ell \to \infty$, it follows that $\hat{x}_\ell \to 0$. This point lies in $\cH = \{0\}$ and is the unique optimal solution of the constrained problem.

Thus, although a Lagrangian reformulation does not exist in this case (as shown in Example~\ref{ex:no}), the solutions to the penalized problems converge to a feasible and optimal solution, as guaranteed by Theorem~\ref{thm:sol_convergence}.
\end{example}

\section{Distinctive Problems Reformulations}
\label{sec:apps}

In this section, we apply the results obtained in Section 3 to combinatorial and mixed-integer problems, which encompass a very wide class of relevant nonconvex optimization problems.
These reformulations are suitable for new computing and algorithmic technologies.

\subsection{QUBO reformulations of combinatorial problems}
\label{sec:QUBOapps}

Combinatorial optimization problems are a very important class of nonconvex optimization problems as they arise in  applications in many fields~\citep[][]{grotschel1995combinatorial}. As evidenced in Section~\ref{sec:intro}, there is a renewed interest in reformulating combinatorial optimization problems as QUBO problems~\citep[][]{lasserre2016max, lucas2014ising, quintero2022characterization, nannicini2019performance, gusmeroli2022expedis}. In what follows, we show how our results in Section~\ref{sec:reform} contribute to this current and active line of research.

The next corollary specializes the Lagrangian reformulation result in Theorem~\ref{thm:reform} to the case when the feasible set of~\eqref{mod:P_D} is finite and integer, and the equality constraints are given by polynomials with rational coefficients, a case encompassing most  combinatorial optimization problems.

\begin{corollary}
\label{cor:genlasserre} Let $f$ be a real valued continuous function. Let $\D\subset \mathbb{Z}^n$ be bounded and $h_j\in \mathbb{Z}[x]$
be nonnegative for every $x\in \D$  and $j=1, \ldots, m$ and satisfy $\D \cap \cH \neq \emptyset$. Then, problem \eqref{mod:LDy} is a Lagrangian reformulation of~\eqref{mod:P_D} for any $y\in \R^m$ such that $y_j\ge \min\{f(x): x \in \D, x\in\cH\} - \min\{f(x): x \in \D\}$,  $j = 1,2,\ldots, m$.
\end{corollary}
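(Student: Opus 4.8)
The plan is to derive Corollary~\ref{cor:genlasserre} as a direct specialization of Theorem~\ref{thm:reform}, so the bulk of the work is verifying that the hypotheses of that theorem hold under the present assumptions, and then checking that the concrete multiplier bound stated here dominates the abstract bound $\tfrac{\eqref{mod:P_D}^* - (D_0)^*}{\tilde h}$ from Theorem~\ref{thm:reform}\ref{it:dualattain1}. First I would dispose of trivialities: $\D$ is a bounded subset of $\Z^n$, hence finite, hence compact; $f$ is continuous and each $h_j$ is a polynomial, hence continuous; nonnegativity of $h_j$ on $\D$ and $\D\cap\cH\neq\emptyset$ are assumed outright. So the only nontrivial hypothesis of Theorem~\ref{thm:reform} still to be checked is that $\D\setminus\cH$ is closed.

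The key observation is that $\D$ is a \emph{finite} set, so every subset of $\D$ — in particular $\D\setminus\cH$ — is closed. (Equivalently, $\D$ inherits the discrete topology as a subspace of $\R^n$.) This immediately puts us in the regime of Theorem~\ref{thm:reform}, and also, via Lemma~\ref{lem:htilde>0}, gives $\tilde h > 0$, so the quantity $\tfrac{\eqref{mod:P_D}^* - (D_0)^*}{\tilde h}$ is a well-defined finite number and Theorem~\ref{thm:reform}\ref{it:dualattain1} applies: \eqref{mod:LDy} is a Lagrangian reformulation of~\eqref{mod:P_D} for every $y$ with $y_j \ge \tfrac{\eqref{mod:P_D}^* - (D_0)^*}{\tilde h}$, $j=1,\dots,m$. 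Note $\eqref{mod:P_D}^* = \min\{f(x): x\in\D, x\in\cH\}$ and $(D_0)^* = \min\{f(x): x\in\D\}$, matching the two terms in the stated bound.

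It then remains to show the cruder bound $y_j \ge \eqref{mod:P_D}^* - (D_0)^*$ in the corollary implies $y_j \ge \tfrac{\eqref{mod:P_D}^* - (D_0)^*}{\tilde h}$; since $\eqref{mod:P_D}^* - (D_0)^* \ge 0$ (any feasible point of~\eqref{mod:P_D} is feasible for~$(D_0)$), it suffices that $\tilde h \ge 1$. This is where the integrality of the $h_j$ enters: for any $x \in \D\setminus\cH$, $h(x) = \sum_{j=1}^m h_j(x)$ is a sum of nonnegative integers, at least one of which is strictly positive (else $x\in\cH$), so $h(x) \ge 1$. Because $\D\setminus\cH$ is closed (finite), $\tilde h = \lim_{\epsilon\downarrow 0} h^\epsilon = \min\{h(x): x\in\D\setminus\cH\} \ge 1$ by the argument in the proof of Lemma~\ref{lem:htilde>0}. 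Hence $\tfrac{\eqref{mod:P_D}^* - (D_0)^*}{\tilde h} \le \eqref{mod:P_D}^* - (D_0)^*$, and any $y$ satisfying the corollary's bound satisfies the hypothesis of Theorem~\ref{thm:reform}\ref{it:dualattain1}, completing the proof.

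There is no real obstacle here; the main thing to be careful about is the degenerate case $\D\setminus\cH = \emptyset$ (when $\D\subseteq\cH$), where $\tilde h = +\infty$ and the bound is vacuous — but Theorem~\ref{thm:reform} already handles this case, so it requires only a one-line remark. One could also streamline by invoking Proposition~\ref{prop:dualattain} instead, noting $\tfrac{\eqref{mod:P_D}^*-f(x)}{h(x)}$ is bounded above on the finite set $\D\setminus\cH$, but that route gives existence of \emph{some} reformulating $y$ rather than the explicit threshold the corollary advertises, so the Theorem~\ref{thm:reform} route is preferable.
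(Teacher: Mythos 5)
Your proposal is correct and follows essentially the same route as the paper's proof: observe that $\D\setminus\cH$ is finite and hence closed, use integrality of the $h_j$ on $\D\subset\Z^n$ to get $\tilde h\ge 1$, and then invoke Theorem~\ref{thm:reform}\ref{it:dualattain1} with the crude bound $\eqref{mod:P_D}^*-(D_0)^*$ dominating $\tfrac{\eqref{mod:P_D}^*-(D_0)^*}{\tilde h}$. Your version is in fact slightly more careful than the paper's on one point (you argue via $h(x)=\sum_j h_j(x)\ge 1$ on $\D\setminus\cH$, whereas the paper loosely asserts each $h_j$ is positive there), and your explicit handling of $\D\setminus\cH=\emptyset$ is a harmless addition.
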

\begin{proof} The set $\D \setminus \cH$ is finite and therefore closed. Also, each $j=1,\dots,m$, $h_j$  takes only integer positive values in $\D\setminus \cH$. Thus for any $\epsilon >0$, it follows that $h^\epsilon \ge 1$. Therefore, $\tilde h \ge 1$. The statement then follows from Theorem~\ref{thm:reform}.
\end{proof}

Corollary~\ref{cor:genlasserre} generalizes the QUBO reformulation result for combinatorial problems of~\citet[][Thm.~2.2, Lem. 2.1]{lasserre2016max}\footnote{\citet[][Thm.~2.2]{lasserre2016max} can be seen as a corollary of~\citet[]{lasserre2016max}[Lem.~2.1] after a natural homogenization argument. It is worth to note that the Ising and quantum devices  do not require the QUBO's objective to be a homogeneous quadratic.}, which applies to the particular case when in problem~\eqref{mod:P_D} the objective $f$ is a quadratic polynomial, the set $\D$ is $\{-1,1\}^n$ (or $\{0,1\}^n$), and the equality constraints $h_j$, $j=1,\dots,m$, are linear with integer coefficients.

The ability to
 incorporate nonlinear equality constraints in Corollary~\ref{cor:genlasserre} is crucial. To demonstrate this, we will examine the set packing (SP) problem~\citep[][]{vemuganti1998applications}. Namely,  let $c \in \R^n$, $Q \in \R^{n \times n}$, and $A \in \{0,1\}^{m \times n}$, and consider the optimization problem
\begin{equation}
\label{eq:linpack}
z_{\SP}:=\min \{x\tr Q x + c\tr x: Ax \le e,  x \in \{0,1\}^n\},
\end{equation}
where $e$ is the vector of all ones in the appropriate dimension. Although the SP problem is formulated as a maximization problem in the literature, here we take (w.l.o.g.) the liberty to formulate it as a minimization problem to match the format of problem~\eqref{mod:P_D}, as well as the results in~\citep{lasserre2016max}.
There is a wide range of crucial combinatorial optimization problems that can be formulated as a set packing problem, including the stable set problem, and problems arising in
railway infrastructure design, ship scheduling, resource constrained project scheduling~\citep{delorme2004grasp}, combinatorial auctions~\citep{ pekevc2003combinatorial}, and forestry~\citep{ ronnqvist2003optimization}.

As detailed next, by adding slack variables, problem~\eqref{eq:linpack} can be written as a linearly constrained binary quadratic optimization problem, and then the results in~\citep{lasserre2016max}  can be used to obtain a QUBO reformulation of the SP problem. On the other hand, we will show that a more efficient QUBO reformulation of the SP problem is obtained by applying Corollary~\ref{cor:genlasserre}, by rewriting the inequality constraint $Ax \le e$ using quadratic equality constraints.

Throughout, assume that~\eqref{eq:linpack} is feasible; that is, that $z_{\SP} < +\infty$. To obtain a QUBO reformulation of~\eqref{eq:linpack}, one can first rewrite~\eqref{eq:linpack} as
\begin{equation}
\label{eq:linpack2}
z_{\SP}  = \min \{x\tr Q x + c\tr x: Ax + s = e,  x \in \{0,1\}^n, s \in \{0,1\}^m\}.
\end{equation}
Then, it follows from~\citep[][Lem.~2.1]{lasserre2016max} that~\eqref{eq:linpack} can be reformulated as the QUBO problem
\begin{equation}
\label{eq:linQUBO}
z_{\SP}  = \min \{x\tr Q x + c\tr x + 2(\rho(c,Q) + 1)\|Ax + s - e\|^2: x \in \{0,1\}^n, s \in \{0,1\}^m\},
\end{equation}
where $\rho(c,Q) := \max\{|r^1_{c,Q}|, |r^2_{c,Q}|\}$, $r^1_{c,Q}:= \min\{ \langle Q X \rangle + c\tr x: X \succeq 0, X_{ii}= x_i, i=1,\dots,n\}$, $r^2_{c,Q}:= \max\{ \langle Q X \rangle + c\tr x: X \succeq 0, X_{ii}= x_i, i=1,\dots,n\}$~\citep[see,][eq.~(2.2)]{lasserre2016max}. Above, given symmetric matrices $X, Y\in \R^n\times\R^n$, $Y \succeq 0$ indicates that the matrix is positive semidefinite, and $\langle X, Y \rangle$ indicates the trace of the product of $X$ and $Y$.

While the reformulation results in~\citep{lasserre2016max} apply to linearly constrained binary quadratic optimization problems only, the reformulation results presented here apply also to nonlinearly constrained binary quadratic optimization problems. As a result, one can obtain a different QUBO reformulation of~\eqref{eq:linpack}. First, note that~\eqref{eq:linpack} can be reformulated as the nonlinearly constrained binary quadratic optimization problem
\begin{equation}
\label{eq:nonlinpack}
z_{\SP}=\min \{x\tr Q x + c\tr x: A_{ki}A_{kj}x_ix_j = 0,\, 1 \le k \le m,\, 1 \le i < j \le n,\,  x \in \{0,1\}^n\}.
\end{equation}
Taking $f(x) = x\tr Q x + c\tr x$, $\D = \{0,1\}^n$ and $h_{(ijk)} = A_{ki}A_{kj}x_ix_j$ for $1 \le k \le m$, $1 \le i < j \le n$, it is clear that all the assumptions of Corollary~\ref{cor:genlasserre} are satisfied. Further, note that $r^2_{c,Q} \ge z_{\SP}$, and
$r^1_{c,Q} \le \min \{x\tr Q x + c\tr x:  x \in \{0,1\}^n\}$. Thus, it follows from Corollary~\ref{cor:genlasserre} that~\eqref{eq:linpack} can be reformulated as the QUBO problem
\begin{equation}
\label{eq:nonlinQUBO}
z_{\SP}  = \min \left \{x\tr Q x + c\tr x + \frac{1}{2}(r^2_{c,Q} - r^1_{c,Q} + 1) \sum_{k=1}^m \sum_{1 \le i < j \le n} A_{ki}A_{kj}x_ix_j: x \in \{0,1\}^n \right \}.
\end{equation}

The number of binary decision variables in~\eqref{eq:linQUBO} is $n+m$, while the number of decision variables in~\eqref{eq:nonlinQUBO} is $n$. Given the limitations of current and near-term quantum device technologies, this is a significant difference between these two reformulations. The number of {\em qubits} on quantum devices designed to solve QUBO problems is limited. Moreover, the connectivity of the qubits in these devices is also very limited. Indeed, one of the most powerful quantum devices has a ``15-way qubit connectivity''~\citep{DWave}. Loosely speaking, this means that qubits are arranged on sparsely connected cliques of 15 qubits, which means that to {\em embed} a QUBO problem with $N$ variables, a much larger number of qubits $N' >> N$ is needed~\citep[][]{zbinden2020embedding}. There is a rich amount of literature on deriving algorithms to find the optimal  embeddings (i.e., using the least number of qubits), and for specific connectivity patterns, the optimal number of qubits $N'$ is quadratic on $N$~\citep{date2019efficiently}.

\subsection{Reformulations of Mixed-Integer problems}
\label{sec:MIQPapps}

We now consider the more general setting of mixed-integer optimization,
where the goal is to minimize a function over a feasible set defined by linear constraints
with continuous and binary variables. Specifically, let $A\in \R^{m\times n}, b\in \R^m$, $J\subseteq \{1,\dots,n\}$, $f$ be a real-valued continuous function, and consider the problem
\begin{equation}
\tag{B}
\label{mod:OP0}
\begin{array}{lllll}
\displaystyle \min & f(x) \\
\st & Ax=b\\
     & x \in [0,1]^n \cap \B_J,
\end{array}
\end{equation}
where $\B_J:= \{x \in \R^n: x_i \in \{0,1\}, \text{for all } i \in J\}$.

We consider different Lagrangian relaxations of problem~\eqref{mod:OP0}. Given $y \in \R$ one can relax only the linear constraints of~\eqref{mod:OP0} to obtain
\begin{equation}
\label{mod:OP1}
\tag{B$^1_y$}
\min \{ f(x) + y \|Ax-b\|^2: x\in [0,1]^n\cap \B_J\}.
\end{equation}
Problem~\eqref{mod:OP1} is suitable for Ising machines, and quantum devices using quantum annealing or QAOA algorithms when all variables are binary; effective methods for box-constrained mixed-integer or continuous polynomial optimization when the objective is a polynomial~\citep{buchheim2014box, letourneau2023efficient}; and branch-and-bound techniques combined with methods for box-constrained quadratic optimization when the objective is quadratic~\citep[][]{kim2010tackling, bonami2018globally}.

Alternatively, given $z \in \R$, one can relax only the binary variable constraints of~\eqref{mod:OP0} to obtain
\begin{equation}
\label{mod:OP2}
\tag{B$^2_z$} \min \{ f(x) + z \textstyle\sum_{i \in J} x_i(1-x_i) : Ax = b, x\in [0,1]^n\},
\end{equation}
or one can relax both the linear and binary variable constraints of~\eqref{mod:OP0} to obtain
\begin{equation}
\label{mod:OP3}
\tag{B$^3_{yz}$}
\min \{ f(x) + y \|Ax-b\|^2 + z \textstyle\sum_{i \in J} x_i(1-x_i) : x\in [0,1]^n\}.
\end{equation}
Problems of the form~\eqref{mod:OP2} or~\eqref{mod:OP3} are amenable to a wide range of global nonlinear optimization solvers~\citep[][for a recent benchmark of such solvers]{kronqvist2019review}. Since \eqref{mod:OP2} or~\eqref{mod:OP3} are continuous optimization problems, there is no need to use branch-and-bound techniques for binary variables that are implemented in general mixed-integer nonlinear optimization solvers. In fact, recently positive results for solving continuous relaxations of QUBO problems
were obtained in~\citep{bartmeyer2024benefits}. Additionally, problems of the form~\eqref{mod:OP3} are amenable to the use of
recent quantum optimization algorithms like quantum gradient descent~\citep{rebentrost2019quantum}, quantum Hamiltonian descent (QHD)~\citep{2023QHD}, and quantum Langevin dynamics~\citep{chen2023quantum}, designed for quantum devices.

With this in mind, we establish several equivalences between these different Lagrangian relaxations and problem~\eqref{mod:OP0}, providing a complementary tool, alongside novel computational and algorithmic paradigms, to solve~\eqref{mod:OP0}.

We first present equivalence results for the case in which problem~\eqref{mod:OP0} is a purely integer problem (i.e., when $J= \{1,\dots,n\}$). Before presenting these equivalences in Theorem~\ref{thm:equivBPpure}, consider the following example, which illustrates the approach  used  to derive them.

\begin{example}[Equivalence between~\eqref{mod:OP0} and~\eqref{mod:OP2}]
\label{ex:equiv1a}
Consider the optimization problem of the form~\eqref{mod:OP0} explicitly defined by $\min\{f(x) := x(x-\tfrac{1}{2})(x-1): x \in [0,1] \cap \{0,1\}\}$. Clearly, $f$ is neither concave nor convex on $[0,1]$, \eqref{mod:OP0}$^* = 0$, and the set of optimal solutions of~\eqref{mod:OP0} is $\{0,1\}$. Now construct the Lagrangian relaxation of the form~\eqref{mod:OP2} (i.e., obtained by relaxing the binary variable constraint) explicitly defined by $\min\{\cL(x,z):=x(x-\tfrac{1}{2})(x-1) + zx(1-x): x \in [0,1]\}$. For $z > \tfrac{3}{2}$, $\cL(x,z)$ is strictly concave on~$[0,1]$, and thus the optimal solution of~\eqref{mod:OP2} is attained at the extreme points of the the interval $[0,1]$, and $\cL(0,z) =\cL(1,z) = 0$. Thus, for  $z > \tfrac{3}{2}$,~\eqref{mod:OP2} is equivalent to~\eqref{mod:OP0}.
\end{example}

In Example~\ref{ex:equiv1a}, we exploit the property that after adding a large enough multiple of an appropriate quadratic to the original function~$f$, the resulting function~$\cL(x,z)$ becomes strictly concave. Thus, at that point,
the minimum of~$\cL(x,z)$ over the interval is attained at one of the extreme points of the interval and
the desired Lagrangian reformulation is obtained. As the next lemma formally states, $L$-smooth functions have the aforementioned property in the more general case in which the feasible set is defined by a polytope.

\begin{lemma}
\label{lem:lip}
Let $P \subseteq [0,1]^n$ be a polytope, and $f$ be $L$-smooth on $P$.
Then, for every $z > \tfrac{L}{2}$, $\argmin\{f(x) + z \sum_{i=1}^n x_i(1-x_i): x \in P\} \cap E \neq \emptyset$, where $E$ is the set of extreme points of P.
\end{lemma}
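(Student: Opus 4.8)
The plan is to argue that, for $z>\tfrac{L}{2}$, the penalized objective
\[
g(x):=f(x)+z\sum_{i=1}^n x_i(1-x_i)
\]
is concave on $P$, and then to invoke the elementary fact that a continuous concave function on a nonempty polytope attains its minimum at one of the polytope's extreme points. These two ingredients together yield $\argmin\{g(x):x\in P\}\cap E\neq\emptyset$, which is the assertion.

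For the concavity step, I would first use $\sum_{i=1}^n x_i(1-x_i)=e\tr x-\|x\|^2$ (with $e$ the all-ones vector) to write
\[
g(x)=\Bigl(f(x)-\tfrac{L}{2}\|x\|^2\Bigr)+z\,e\tr x+\Bigl(\tfrac{L}{2}-z\Bigr)\|x\|^2 .
\]
Here the middle term is affine and the last is concave since $\tfrac{L}{2}-z<0$, so the task reduces to showing that $x\mapsto f(x)-\tfrac{L}{2}\|x\|^2$ is concave on $P$, equivalently that $\phi:=\tfrac{L}{2}\|\cdot\|^2-f$ is convex on $P$. Since $f$ is continuously differentiable and $P$ is convex, I would establish this through the gradient-monotonicity criterion: for $x,y\in P$,
\[
\bigl(\nabla\phi(x)-\nabla\phi(y)\bigr)\tr(x-y)=L\|x-y\|^2-\bigl(\nabla f(x)-\nabla f(y)\bigr)\tr(x-y)\ge L\|x-y\|^2-L\|x-y\|^2=0,
\]
where the inequality uses the Cauchy--Schwarz inequality together with $L$-smoothness, $\|\nabla f(x)-\nabla f(y)\|\le L\|x-y\|$. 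Monotonicity of $\nabla\phi$ on the convex set $P$ gives convexity of $\phi$ on $P$, hence concavity of $g$ on $P$.

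For the second step, since $g$ is continuous and concave and $P$ is compact, I would take $x^*\in\argmin\{g(x):x\in P\}$, write $x^*=\sum_k\lambda_k v_k$ as a convex combination of the extreme points $v_k\in E$ of $P$ (a polytope is the convex hull of its finitely many vertices), and use concavity to obtain $g(x^*)\ge\sum_k\lambda_k g(v_k)\ge\min_k g(v_k)$; hence some $v_k\in E$ attains $\min_{x\in P}g(x)$, so that $v_k\in\argmin\{g(x):x\in P\}\cap E$. The only point requiring some care is the reduction inside the concavity step: I should check that passing from $L$-smoothness \emph{on $P$} to convexity of $\phi$ \emph{on $P$} uses only convexity of the domain (which holds, as $P$ is a polytope) and nothing about $f$ outside $P$. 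Apart from that, the argument is entirely routine, so I do not expect a genuine obstacle here.
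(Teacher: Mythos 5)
Your proposal is correct and follows essentially the same route as the paper: the paper's one-line proof also rests on the fact that $\tfrac{L}{2}\|\cdot\|^2 - f$ is convex for $L$-smooth $f$ (cited to Giselsson--Boyd), from which concavity of the penalized objective for $z > \tfrac{L}{2}$ and minimization of a concave function at a vertex of the polytope follow implicitly. You simply supply the details (the gradient-monotonicity argument and the vertex-attainment step) that the paper leaves to the citation, and all of these details check out.
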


\begin{proof}
The result follows from the fact that a $L$-smooth function $f$ satisfies that $\tfrac{L}{2}\|\cdot\|_2 -f$ is convex~\citep[][Def. 6]{giselsson2016linear}.
\end{proof}

With Lemma~\ref{lem:lip} at hand, we now state the following equivalences.

\begin{theorem}\label{thm:equivBPpure} Let $A\in \R^{m\times n}, b\in \R^m$, $J= \{1,\dots,n\}$, and $f$ be  $L$-smooth on $[0,1]^n$. Assume that~\eqref{mod:OP0} is feasible. Then, there exist $y'\in \R$ and $z'\in \R$ such that for all $y > y'$ and all $z > z'$, problems \eqref{mod:OP0}, \eqref{mod:OP1}, \eqref{mod:OP2}, and \eqref{mod:OP3} are all equivalent to each other.
That is,  the optimal solutions of all these problems coincide.
\end{theorem}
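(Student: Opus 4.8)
The plan is to establish the chain of equivalences by combining the general Lagrangian reformulation machinery of Section~\ref{sec:reform} with the concavification property of Lemma~\ref{lem:lip}. I would organize the argument around three dualizations, each fitting the template of Theorem~\ref{thm:reform}, and then glue the resulting equalities of optimal values and optimal solution sets together. Since $J = \{1,\dots,n\}$, the feasible set of~\eqref{mod:OP0} is $[0,1]^n \cap \{0,1\}^n = \{0,1\}^n$, which is finite and nonempty by feasibility; call its induced problem $P$.

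\emph{Step 1: Equivalence of~\eqref{mod:OP0} and~\eqref{mod:OP2}.} Here $\D = \{x \in [0,1]^n: Ax = b\}$ is compact, and $h(x) = \sum_{i=1}^n x_i(1-x_i) \ge 0$ on $\D$ with $\cH = \B_J$, so $\D \setminus \cH$ is the set of feasible points with at least one fractional coordinate. The obstacle is that this set need not be closed, so Theorem~\ref{thm:reform} does not apply directly. Instead I would use Lemma~\ref{lem:lip}: for $z > L/2$, adding $z\sum_i x_i(1-x_i)$ to $f$ makes $\tfrac{L}{2}\|\cdot\|^2 - f$ convex, hence $\cL(x,z)$ is concave on the polytope $\D$ (note $\sum_i x_i(1-x_i) = \sum_i x_i - \|x\|^2$ differs from $-\|x\|^2$ by a linear term), so its minimum over $\D$ is attained at an extreme point of $\D$; one must then argue that among the minimizing extreme points there is an \emph{integral} one — using that at any integral $x$, $h(x)=0$ so $\cL(x,z) = f(x) \ge \eqref{mod:OP0}^*$, while at a fractional extreme point $h(x)>0$ forces strict improvement for $z$ large. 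This pins $\eqref{mod:OP2}^* = \eqref{mod:OP0}^*$ and, for $z$ strictly above the threshold, forces \emph{every} minimizer of~\eqref{mod:OP2} to be integral and hence optimal for~\eqref{mod:OP0}; conversely every optimal $x$ of~\eqref{mod:OP0} is feasible for~\eqref{mod:OP2} with matching objective. This is the main obstacle, since non-closedness of $\D\setminus\cH$ is exactly the situation Theorem~\ref{thm:reform} was engineered to avoid, and the $L$-smoothness hypothesis is what rescues it.

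\emph{Step 2: Equivalence of~\eqref{mod:OP0} and~\eqref{mod:OP1}.} Now $\D = \{0,1\}^n$, which is finite (hence $\D\setminus\cH$ closed), $h(x) = \|Ax-b\|^2 \ge 0$, $\cH = \{x: Ax=b\}$, and $\D\cap\cH\neq\emptyset$ by feasibility. This is exactly the setting of Corollary~\ref{cor:genlasserre} (or Theorem~\ref{thm:reform} with $\tilde h \ge$ the smallest positive value of $\|Ax-b\|^2$ over $\{0,1\}^n\setminus\cH$, which is positive and attained), so there is a finite threshold $y'$ such that for $y > y'$, \eqref{mod:OP1} is equivalent to~\eqref{mod:OP0}: optimal values agree and, by Theorem~\ref{thm:reform}\ref{it:dualattain2}, optimal solution sets agree.

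\emph{Step 3: Equivalence with~\eqref{mod:OP3} and assembling the chain.} For~\eqref{mod:OP3} I would view it as the further dualization of the binary constraints starting from~\eqref{mod:OP1}, or symmetrically as dualizing the linear constraints starting from~\eqref{mod:OP2}; the cleanest route is to fix $z$ above the Step-1 threshold so that for each fixed such $z$ the map $x \mapsto f(x) + z\sum_i x_i(1-x_i)$ is still handled, then apply the Step-2/Theorem~\ref{thm:reform} argument to the dualization of $Ax=b$ over $\D = [0,1]^n$ — but here $\D$ is not finite, so one instead invokes the concavity from Lemma~\ref{lem:lip} to reduce the minimum to extreme points of $[0,1]^n$, i.e.\ to $\{0,1\}^n$, landing back in the finite case. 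Concretely: for $z > L/2$, $f + z\sum_i x_i(1-x_i) + y\|Ax-b\|^2$ has the property that $\tfrac{L}{2}\|\cdot\|^2$ minus its ``$f$-part'' is convex while the added terms are concave in a controlled way, so for suitable $y$ the whole objective is concave on $[0,1]^n$ and minimized at a vertex; then the same positive-value-of-$h$ argument as in Step~1 forces the vertex to be feasible for both $Ax=b$ and integrality. Taking $y' , z'$ to be the maximum of the thresholds appearing in Steps~1--3, for all $y>y'$ and $z>z'$ all four problems have the same optimal value and the same optimal solution set, which is the claim. The only genuinely delicate point throughout is making the "strict threshold $\Rightarrow$ no spurious minimizers" implication airtight, for which I would isolate a small lemma: if $g$ is concave on a polytope $Q$, $h\ge 0$ on $Q$ with $h$ vanishing exactly on a nonempty set meeting the vertices of $Q$, and $h$ is bounded below by a positive constant on the vertices where it does not vanish, then for the penalty parameter large enough every minimizer of $g + (\text{penalty})\,h$ lies in $\{h=0\}$ — which is precisely the finite-vertex incarnation of Theorem~\ref{thm:reform}\ref{it:dualattain2}.
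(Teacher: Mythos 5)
Your proposal follows essentially the same route as the paper: the same decomposition into three pairwise equivalences (\eqref{mod:OP0}--\eqref{mod:OP1} via Theorem~\ref{thm:reform} on the finite set $\{0,1\}^n$; \eqref{mod:OP0}--\eqref{mod:OP2} via Lemma~\ref{lem:lip} to reduce to the extreme points of the polytope and then the finite-set/Theorem~\ref{thm:reform} argument; \eqref{mod:OP1}--\eqref{mod:OP3} via concavification over $[0,1]^n$), glued together by transitivity. The one imprecise spot---which the paper's own proof shares---is the last step: $y\|Ax-b\|^2$ is convex rather than ``concave in a controlled way,'' so the concavity threshold for $z$ there grows with $y$ (roughly $(L+2y\sigma_{\max}(A)^2)/2$, not $L/2$), a dependence that neither your phrase ``for suitable $y$'' nor the paper makes explicit.
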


\begin{proof}
It suffices to show that the equivalence statement follows between~\eqref{mod:OP0} and~\eqref{mod:OP1},~\eqref{mod:OP0} and~\eqref{mod:OP2}, and~\eqref{mod:OP3} and~\eqref{mod:OP1}, as the other pairwise problem equivalences then follow by transitivity.

To prove
that~\eqref{mod:OP0} is equivalent to~\eqref{mod:OP1},
notice that in this case, problem~\eqref{mod:OP0} is equivalent to $\min\{f(x): \|Ax-b\|^2 = 0, x \in \{0,1\}^n\}$.
Since all the conditions of Theorem~\ref{thm:reform} are satisfied, it follows that
there exists $y'\in \R$ such that for every $y> y'$,~\eqref{mod:OP0} is equivalent to \eqref{mod:OP1}.

To prove
that \eqref{mod:OP0} is equivalent to~\eqref{mod:OP2},
note that~\eqref{mod:OP0} is equivalent to $\min\{f(x): x_i(1-x_i) = 0, i=1,\dots,n: x \in \{x \in [0,1]^n: Ax = b\}\}$. Then, weak duality (Proposition~\ref{pro:weak}) implies that
\begin{equation}
\label{eq:weak02}
\eqref{mod:OP0}^*\geq \eqref{mod:OP2}^*.
\end{equation}
We will argue that there exists $z'$ such that for every $z> z'$, $\eqref{mod:OP0}^*\leq \eqref{mod:OP2}^*$ and thus $\eqref{mod:OP0}^*= \eqref{mod:OP2}^*$.
 To see this, note that from Lemma~\ref{lem:lip}, it follows that for any $z > \frac{L}{2}$, problem~\eqref{mod:OP2} is equivalent to
\begin{equation}
\label{mod:R}
\min \left \{ f(x) + z \sum_{i=1}^n x_i(1-x_i) :  x\in E \right \},
\end{equation}
where $E$ is the set of extreme points of $\{x\in [0,1]^n: Ax = b\}$.  Furthermore,
by Theorem~\ref{thm:reform}, there is $\tilde{z} \in \R$ such that for every $z>\tilde{z}$, problem~\eqref{mod:R} is equivalent to
\begin{equation}
\label{mod:Rp}
\min\{ f(x): x_i(1-x_i) = 0, i=1,\dots,n: x\in E\}.
\end{equation}
Let $z'=\max\{\tfrac{L}{2}, \tilde{z}\}$, then, for every $z>z'$, we have that~\eqref{mod:OP2} is equivalent to~\eqref{mod:R}, and that~\eqref{mod:R} is equivalent to~\eqref{mod:Rp}.
Thus,~\eqref{mod:OP2} is equivalent to~\eqref{mod:Rp}, implying that: $\eqref{mod:OP2}^* = \eqref{mod:Rp}^* \ge \eqref{mod:OP0}^*$, which together with~\eqref{eq:weak02} implies that $\eqref{mod:OP2}^* = \eqref{mod:OP0}^*$. Now that we have proven that the optimal values coincide, we need to prove the the optimal solution coincide as well. Assume
that $x^*$ is an optimal solution of~\eqref{mod:OP2}, then from the equivalence between~\eqref{mod:OP2} and~\eqref{mod:Rp}, it follows that $x^*$ is a feasible solution of~\eqref{mod:OP0} with objective $f(x^*) = \eqref{mod:OP0}^*$. Thus, $x^*$ is an optimal solution of~\eqref{mod:OP0}.
On the other hand, if $\tilde{x}^*$ is an optimal solution of~\eqref{mod:OP0}, then $\tilde{x}^*$ is feasible for~\eqref{mod:OP2} with objective value $f(\tilde{x}^*) = \eqref{mod:OP0}^* = \eqref{mod:OP2}^*$. Thus, $\tilde{x}^*$ is an optimal solution of~\eqref{mod:OP2}.

To prove
that that~\eqref{mod:OP1} is equivalent to~\eqref{mod:OP3},
fix $y \in \R$ and note that  from Lemma~\ref{lem:lip}, it follows that for any $z > \tfrac{L}{2} + y \|A\|^2$,  the optimal solutions of~$\eqref{mod:OP3}$ are all in $\{0,1\}^n$.
\end{proof}

The following example illustrates how one of the equivalences stated in Theorem~\ref{thm:equivBPpure} is obtained.

\begin{example}
Consider the optimization problem of the form~\eqref{mod:OP0} explicitly defined by $\min\{x(x+1): x \in [\tfrac{1}{2}, 1] \cap \{0,1\}\}$.
It is easy to see
that $\eqref{mod:OP0}^* = 2$ and the optimal solution of~\eqref{mod:OP0} is $x^*=1$.
Now, construct the Lagrangian relaxation of the form~\eqref{mod:OP2} (i.e., obtained by relaxing the binary variable constraint) explicitly defined by $\min\{ \cL(x,z):=x(x+1) + zx(1-x):  x \in [\frac{1}{2},1]\}$. It is easy to analyze problem~\eqref{mod:OP2}, to reach the following conclusions:
\begin{enumerate}
\item  For $0<z<5$	the optimal solution of~\eqref{mod:OP2} is~$x(z)^* = \tfrac{1}{2}$ with objective value $\eqref{mod:OP2}^* = \tfrac{z+3}{4}$. Notice that for $1\le z<5 $, the objective function of~\eqref{mod:OP2} is concave and thus attains its minimum at an extreme point the interval $[\tfrac{1}{2}, 1]$.
\item For $z \ge 5$, the optimal solution of~\eqref{mod:OP2} is~$x(z)^* = 1$ with objective value $\eqref{mod:OP2}^* = 2$; that is, for $z\ge 5$, problems~\eqref{mod:OP2} and~\eqref{mod:OP0} are equivalent.
\label{it:exlp2}
\end{enumerate}
Thus, unlike in Example~\ref{ex:equiv1a}, where it was enough to set $z$ to a value at which the optimal solution of~\eqref{mod:OP2} would be attained at the extreme point of the underlying polyhedron defining its feasible set, in this example, a larger value of $z$ is needed.
\end{example}

The following example illustrates the need for the $L$-smoothness condition in Theorem~\ref{thm:equivBPpure}.

\begin{example}[No equivalence without $L$-smoothness]
\label{ex:noconvergence}
Consider the function $f(x) = -\sqrt{x} + 2x (4x^2 - 2x -1)$  defined over the interval $[0,1]$ and the optimization problem of the form~\eqref{mod:OP0} explicitly defined by $\min\{f(x): x \in [0,1] \cap \{0,1\}\}$. The function $f$ is not $L$-smooth, \eqref{mod:OP0}$^* = 0$, and $0$ is the unique optimal solutions of~\eqref{mod:OP0}.
Now, construct the Lagrangian relaxation of the form~\eqref{mod:OP2} (i.e., obtained by relaxing the binary variable constraint) explicitly defined by $\min\{ \cL(x,z):=f(x) + zx(1-x):  x \in [0,1]\}$. Figure~\ref{fig:function} shows the plot of $\cL(x,z)$ for different values of $z$. Figure~\ref{fig:function} (right) shows $x(z)^*$ the solution set to the equation $\frac{\partial \cL(x,z)}{\partial x} = 0$.
For  $0 < z \le 5$, problem~\eqref{mod:OP2} attain its minimum at $\tfrac{1}{2} < x_3^*(z) < 1 $. Further,  when $z>5$, problem~\eqref{mod:OP2} attains its (global) minimum at $0 < x_1^*(z) < \tfrac{1}{2} $. It also has a local maximum $0< x_2^*(z) < \tfrac{1}{2}$ and a local min at $\tfrac 12 < x_3^*(z) < 1$, as shown in Figure~\ref{fig:function}. There is no $z \in \R$ for which~\eqref{mod:OP2} attains its minimum at the extreme points of the interval $[0,1]$, which illustrates the need for the smoothness condition in Lemma~\ref{lem:lip}. In turn, there is no $z \in \R$ for which~\eqref{mod:OP2} is equivalent to~\eqref{mod:OP0}, which illustrates the need for the smoothness condition in Theorem~\ref{thm:equivBPpure}.

However, note that $\lim_{z \to \infty} \{x_1^*(z)\} = \{0\}$ (see Figure~\ref{fig:function} (right)), and $\lim_{z \to \infty} \eqref{mod:OP2}^* = 0$. So loosely speaking, in the limit there is equivalence between~\eqref{mod:OP0} and~\eqref{mod:OP2}.
\end{example}

\begin{figure}[H]
  \begin{subfigure}[b]{0.525\linewidth}
    \centering
    \includegraphics[width=0.8\linewidth]{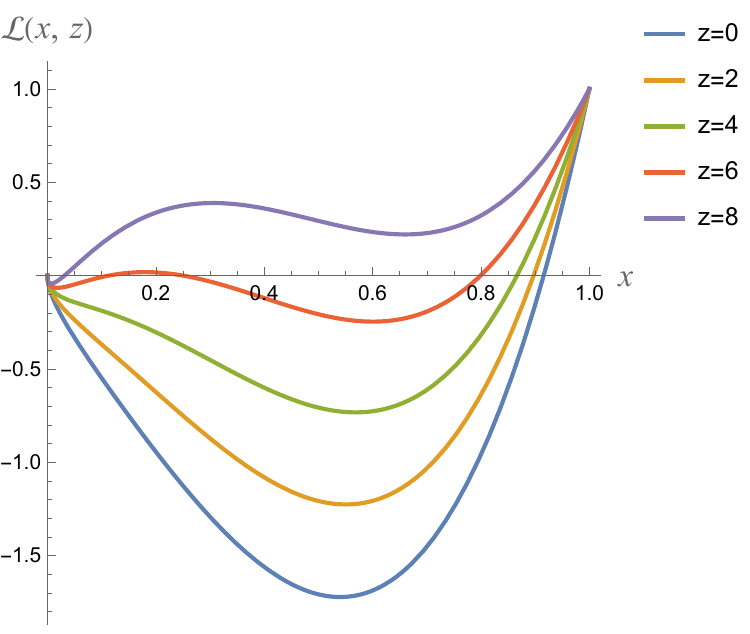}
  \end{subfigure}
  \begin{subfigure}[b]{0.475\linewidth}
    \centering
    \includegraphics[width=0.75\linewidth]{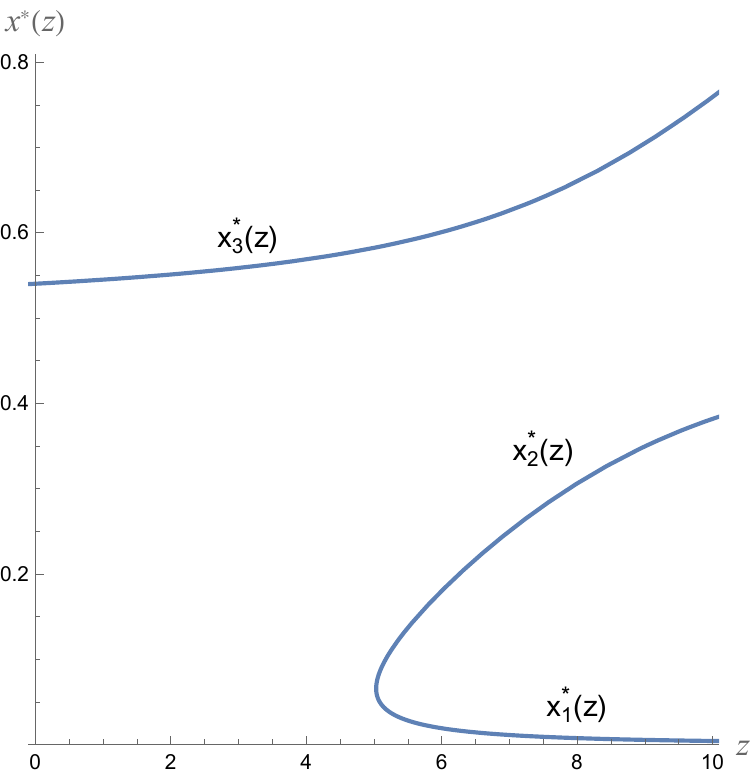}
  \end{subfigure}
  \caption{Left: Plots of functions $\cL(x,z)$ defined in Example~\ref{ex:noconvergence} in the interval $[0.1]$. Right: Plot of solutions to $\frac{\partial \cL(x,z)}{\partial x} = 0$ for different values of $z$. \label{fig:function}
}
 \end{figure}

The limit behavior illustrated in Example~\ref{ex:noconvergence} is formally shown to hold in the next theorem for problems of the form~\eqref{mod:OP0} with an objective that is not $L$-smooth or that are mixed-integer (i.e., when $J \subset \{1,\dots,n\}$). For that purpose,  let us use $\nint(x)$ to denote the unique integer vector $z\in \Z^n$ whose component $z_i$ corresponds to the nearest integer to $x_i$, for $i=1,\ldots, n$ (break ties arbitrarily, for instance by rounding to the closest even integer). Also, for $J\subseteq \{1,2\ldots, n\}$, let $x_J \in \R^J$ denote the vector $x \in \R^n$ restricted to the components indexed by $J$.

\begin{theorem}\label{thm:equivBPmix} Let $A\in \R^{m\times n}, b\in \R^m$, $J\subseteq \{1,\dots,n\}$, and $f$ be a real-valued continuous function. Further, assume that \eqref{mod:OP0} is feasible. Then $\eqref{mod:OP0}^* = \lim_{y \to \infty}\eqref{mod:OP1}^* = \lim_{z \to \infty}\eqref{mod:OP2}^* = \lim_{y,z \to \infty}\eqref{mod:OP3}^*$.
Furthermore, there exist a positive number $z'\in \mathbb{R}$ such that for every $z\geq z'$,
\begin{enumerate}[label = (\roman*)]
\item If $x$ is an optimal solution of~\eqref{mod:OP2}, there exists an optimal solution $x^*$ for \eqref{mod:OP0} such that $x^*_J=\nint(x_J)$.
\label{it:2a}
\item For each $y>0$, if $x$ is an optimal solution for~\eqref{mod:OP3}, there exists an optimal solution $x^*$ for \eqref{mod:OP1} such that $x^*_J=\nint(x_J)$.
\label{it:2b}
\end{enumerate}
\end{theorem}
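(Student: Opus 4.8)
The plan is to prove Theorem~\ref{thm:equivBPmix} in two phases: first the statement about optimal \emph{values} (the three limits), and then the statement about \emph{solutions} via the nearest-integer rounding.

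\textbf{Phase 1 (optimal values).} For each of the three Lagrangian relaxations I would reduce to the limit statement of Lemma~\ref{lem:lim=sup}. For \eqref{mod:OP1}, write \eqref{mod:OP0} as $\min\{f(x): \|Ax-b\|^2 = 0,\ x \in [0,1]^n \cap \B_J\}$; the set $\D = [0,1]^n \cap \B_J$ is compact, $h(x) = \|Ax-b\|^2 \ge 0$ is continuous, and \eqref{mod:OP0} feasible means $\D \cap \cH \neq \emptyset$, so Lemma~\ref{lem:lim=sup} gives $\lim_{y\to\infty}\eqref{mod:OP1}^* = \eqref{mod:OP0}^*$. For \eqref{mod:OP2}, take $\D = \{x \in [0,1]^n: Ax = b\}$ (compact), $h(x) = \sum_{i \in J} x_i(1-x_i) \ge 0$ on $\D$, again apply Lemma~\ref{lem:lim=sup}. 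For \eqref{mod:OP3}, take $\D = [0,1]^n$ and $h(x) = \|Ax-b\|^2 + \sum_{i\in J} x_i(1-x_i) \ge 0$; note $\cH$ here is exactly the feasible set of \eqref{mod:OP0}, so $\D \cap \cH \neq \emptyset$ by feasibility, and Lemma~\ref{lem:lim=sup} applies once more. This phase is routine bookkeeping.

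\textbf{Phase 2 (solution rounding).} This is the substantive part. The idea is that for $z$ large, adding $z\sum_{i\in J}x_i(1-x_i)$ penalizes fractional values on the $J$-coordinates so heavily that any optimal solution must be \emph{close} to integral on those coordinates — close enough that rounding lands exactly on the optimal integral solution. Concretely, for \ref{it:2a}: let $x(z)$ be optimal for \eqref{mod:OP2} and let $v := \eqref{mod:OP0}^*$. Using feasibility of some integral point $\bar x$, $f(x(z)) + z\sum_{i\in J}x(z)_i(1-x(z)_i) \le f(\bar x) = v$, so $\sum_{i\in J}x(z)_i(1-x(z)_i) \le (v - f(x(z)))/z \le (v + M)/z$ where $M$ bounds $|f|$ on the compact feasible set. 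Hence $\max_{i\in J}\operatorname{dist}(x(z)_i,\{0,1\}) \to 0$ as $z\to\infty$; once this quantity is $< 1/2$, $\nint(x(z)_J)$ is well-defined and unambiguous. I would then argue that $\nint(x(z)_J)$ together with the continuous coordinates gives (or can be completed to) a feasible point of \eqref{mod:OP0}: the continuous part stays in $[0,1]$, and the linear constraint $Ax=b$ is preserved because $x(z)$ already satisfies $Ax(z)=b$ and the rounding perturbation on $J$-coordinates is $o(1)$ — but here a \emph{discreteness} argument on the integral feasible region is needed (the set of integral feasible points is finite, hence separated from the continuous relaxation boundary), which is where I expect the main technical care. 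Finally, a limiting/subsequence argument combined with Phase~1 ($\eqref{mod:OP2}^* \to v$) forces $f$ of the rounded point to equal $v$, i.e.\ it is optimal for \eqref{mod:OP0}. The proof of \ref{it:2b} is parallel: for fixed $y$, an optimal $x(z)$ of \eqref{mod:OP3} satisfies $f(x(z)) + y\|Ax(z)-b\|^2 + z\sum_{i\in J}x(z)_i(1-x(z)_i) \le f(\bar x) = v$ (using $\eqref{mod:OP1}^*\le v$ and weak duality), giving the same bound $\sum_{i\in J}x(z)_i(1-x(z)_i) = O(1/z)$, hence $\nint(x(z)_J)$ well-defined for large $z$, and one shows the rounded point (completed appropriately) is optimal for \eqref{mod:OP1}.

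\textbf{Main obstacle.} The delicate point is establishing a \emph{uniform} threshold $z'$ that works simultaneously for all $z \ge z'$ and makes the rounding land on a genuine optimal solution, rather than merely a $z$-dependent statement. This requires combining (a) the finiteness/separation of the integral feasible points of \eqref{mod:OP0}, (b) the uniform bound on $|f|$ over the compact feasible region, and (c) continuity of $f$ to transfer optimality of the value to optimality of the rounded point. The argument that the rounded $J$-coordinates, together with a suitable choice of the continuous coordinates, remain feasible — and in particular that $Ax=b$ can still be met — is the step where I would be most careful, since naively rounding can violate the equality constraint unless one exploits the structure that $x(z)$ already satisfies it and the integral coordinates decouple appropriately from the linear system restricted to feasibility.
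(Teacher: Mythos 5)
Your Phase 1 matches the paper: all three value limits are immediate from Lemma~\ref{lem:lim=sup} with the appropriate choices of $\D$ and of the nonnegative constraint functions (for \eqref{mod:OP3} you should keep $\|Ax-b\|^2$ and $\sum_{i\in J}x_i(1-x_i)$ as two separate $h_j$'s so that the two multipliers $y,z$ may diverge independently; lumping them into a single $h$ only yields the diagonal limit $y=z$).

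Phase 2, however, has a genuine gap, and it is exactly the one you flag as the ``main obstacle.'' Your penalty bound $\sum_{i\in J}x(z)_i(1-x(z)_i)\le (v+M)/z$ shows only that the $J$-coordinates of $x(z)$ approach the set $\{0,1\}^J$; it does not show that $\nint(x(z)_J)$ equals $x^*_J$ for some optimal $x^*$ of \eqref{mod:OP0}, which is what the theorem asserts. Your proposed repair --- round the $J$-coordinates and then verify feasibility and optimality of a completed point --- runs into the problem you yourself identify: the rounded point generically violates $Ax=b$, and there is no canonical completion. The paper sidesteps this entirely by proving the stronger statement that $\dist(x(z),S)\to 0$, where $S$ is the set of optimal solutions of \eqref{mod:OP0}: it invokes Theorem~\ref{thm:sol_convergence} (every accumulation point of the penalized minimizers lies in $S$) and converts it into a uniform threshold $z'$ by contradiction and compactness --- if $\dist(x^{z_\ell},S)>\epsilon$ along some $z_\ell\to\infty$, then an accumulation point of $(x^{z_\ell})$ would have to lie in $S$, a contradiction. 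Once $\dist(x,S)<\tfrac{1}{2}$, there is $x^*\in S$ with $|x_i-x^*_i|<\tfrac{1}{2}$ for all $i\in J$, and since $x^*_J$ is integral this forces $\nint(x_J)=x^*_J$; no feasibility of a rounded point ever needs to be checked. Your sketch contains the raw ingredients (subsequences, compactness, value convergence) but never assembles them into convergence to the optimal \emph{set} in the full norm, which is the step that actually closes the argument; as written, the proof is incomplete at precisely the point where the content lies.
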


\begin{proof} The equalities $\eqref{mod:OP0}^* = \lim_{y \to \infty}\eqref{mod:OP1}^* = \lim_{z \to \infty}\eqref{mod:OP2}^* = \lim_{y,z \to \infty}\eqref{mod:OP3}^*$ follow directly from Lemma~\ref{lem:lim=sup}. We now proceed to prove statement~\ref{it:2a}. The proof of statement~\ref{it:2b} is analogous. For each $z > 0$, let
$x^z \in \argmin \{ f(x) + z \textstyle\sum_{i \in J} x_i(1-x_i) : Ax = b, x\in [0,1]^n\}$.
We claim that for every $\epsilon>0$, there is $z'$ such that for each $z>z'$, there exists $\tilde{x}^z$, an optimal solution of~\eqref{mod:OP0}, such that
$\|x^z - \tilde{x}^z\|<\epsilon$.  By using the claim with $\epsilon = 1/2$ the statement of the theorem follows, as $\|x^z-\tilde{x}\|< \frac{1}{2}$ implies $\nint(x^{z}_J) = \tilde{x}_J$.

To finish the proof, we only need to prove the  claim. Let $S$  be the set of optimal solutions of~\eqref{mod:OP0}.
Assume, by sake of contradiction, that there is an $\epsilon>0$ such that for all $z'>0$, there exists $z>z'$ such that $\dist(x^z,S)>\epsilon$. In particular, for $\ell = 1,2,\dots$ there is $z_\ell > \ell$  such that $\dist(x^{z_\ell},S)>\epsilon$.
Since $(x^{z_\ell}) \in  \{x\in [0,1]^n:Ax = b\}$ which is compact, the sequence  $(x^{z_{\ell}})_{\ell > 0}$ has an accumulation point $\tilde{x}^*$.
By Theorem~\ref{thm:sol_convergence}, $\tilde{x}^*$ is an optimal solution of~\eqref{mod:OP0} contradicting $\dist(x^{z_{\ell}},S)>\epsilon$.
\end{proof}

\section{Final Remarks}

The results provided in the article characterize the equivalence between equality-constrained nonconvex optimization problems and their Lagrangian relaxation. These results bridge a gap in the literature between general
Lagrangian duality results for nonconvex optimization~\citep[][among many others]{rockafellar1974augmented, bertsekas1976penalty, huang2003unified, dolgopolik2016unifying, estrin2020implementing} that often lack the constructive dual attainment characterizations that are necessary to obtain Lagrangian reformulations of the original problem, and Lagrangian reformulation results for particular classes of optimization problems~\citep[][]{lasserre2016max, lucas2014ising, bartmeyer2024benefits, date2021qubo, quintero2022characterization, gusmeroli2022expedis, feizollahi2017exact, bhardwaj2022exact, gu2020exact}. We consider a general setting in which the objective is a continuous function, and Lagrangian reformulations are obtained by relaxing general (possibly non-linear) equality constraints. Unlike other related articles, we characterize the equivalence between a problem and its Lagrangian reformulation in terms of their optimal solutions, instead of their objective value alone. This is particularly relevant for mixed-integer problems. Namely,
as stated in Remark~\ref{rem:round} below, Theorem~\ref{thm:equivBPmix} provides a way to solve mixed-integer problems of the form~\eqref{mod:OP0} using continuous optimization solvers, even when the desired equivalences with the problem's Lagrangian relaxations might only occur in the limit. Basically, one can obtain the optimal value of the problem's integer variables by rounding the value obtained for these variables by solving the Lagrangian relaxations~\eqref{mod:OP2} and~\eqref{mod:OP3}, and then solve a reduced problem to obtain the optimal value of the continuous variables.

\begin{remark}[Rounding to mixed-integer solution]
\label{rem:round} Let $x^*$ denote an optimal solution of problem~\eqref{mod:OP0}. Note that by Theorem~\ref{thm:equivBPmix}\ref{it:2a}, if after solving~\eqref{mod:OP2} for some fixed $z \in \R$, its optimal solution $x(z)^*$ satisfies $\max\{|x(z)^*_i - x^*_i|: i \in J\} < \tfrac{1}{2}$, then problem~\eqref{mod:OP0} can be solved by solving the continuous optimization problem
$\min \{ \tilde{f}(x_{J^c}): A_{J^c}x_{J^c}=(b-A_J\nint(x(z)^*_J)), x_i \in [0,1], i \in J^c\}$, where $J^c$ is the complement of $J$, $\tilde{f}(x_{J^c})$ is the function on $x_{J^c}$ obtained after fixing the values of $x_J$ to $x^*_J$, and $A_J$ are the columns of $A$ indexed by the index set $J$. An analogous statement applies when solving~\eqref{mod:OP3} using Theorem~\ref{thm:equivBPmix}\ref{it:2b}.
\end{remark}

Remark~\ref{rem:round} motivates giving a formal look at numerically solving mixed-integer nonconvex optimization problems using continuous nonconvex (or convex if the problem's nonconvexity stems only from integrality constraints) optimization solution approaches  by relaxing the integrality constraints; an approach that is heuristically used in many practical applications (consider, e.g., the explicit penalization methods used in topology optimization~\citep{bendsoe2013topology}).

We finish by noting that our results leave open an interesting question. It is not difficult to construct an example of a mixed-integer problem of the form~\eqref{mod:OP0} with a $L$-smooth objective for which its
associated Lagrangian relaxation~\eqref{mod:OP2} finitely converges to the optimal solution and objective of~\eqref{mod:OP0}. This leaves open the possibility that Theorem~\ref{thm:equivBPpure} might hold even if problem~\eqref{mod:OP0} is not a pure binary problem. For the particular case in which $f$ is a convex quadratic function it follows from the results in~\citep{feizollahi2017exact, bhardwaj2022exact, gu2020exact} that the optimal value of~\eqref{mod:OP1} finitely converges to the optimal value of~\eqref{mod:OP0} (i.e., there is a $y$ for which $\eqref{mod:OP1}^* = \eqref{mod:OP0}^*$). However, it is not clear whether this holds when considering more general objective functions and Lagrangian relaxations of the form~\eqref{mod:OP2} and~\eqref{mod:OP3}.

\section*{Acknowledgements}

The first and third author of this article were supported by  funding from the Defense Advanced Research Projects
Agency (DARPA), ONISQ grant W911NF2010022, titled The Quantum Computing Revolution
and Optimization: Challenges and Opportunities. Also, this research used resources of the Oak Ridge Leadership Computing Facility, which is a DOE Office of Science User Facility supported under Contract DE-AC05-00OR22725.

\bibliography{Lagrangian_BibTeX}

\end{document}